\allowdisplaybreaks \numberwithin{equation}{section}
\theoremstyle{plain}
\newtheorem{theorem}{Theorem}[section]
\newtheorem{proposition}[theorem]{Proposition}
\newtheorem{lemma}[theorem]{Lemma}
\theoremstyle{definition}
\newtheorem{definition}[theorem]{Definition}
\newtheorem{remark}[theorem]{Remark}
\def \de {\mathrm{d}}
\def \e {\varepsilon}
\def \A {\mathcal A}
\def \r {r_{_{2N+1}}}
\title[A Blaschke-Lebesgue Theorem for the Cheeger constant]{A Blaschke-Lebesgue Theorem for the Cheeger constant}
\author{A. Henrot, I. Lucardesi}
\begin{document}

\begin{abstract}
In this paper we prove a new extremal property of the Reuleaux triangle: it maximizes
the Cheeger constant among all bodies of (same) constant width. The proof relies on
a fine analysis of the optimality conditions satisfied by an optimal Reuleaux polygon
together with an explicit upper bound for the inradius of the optimal domain.
As a possible perspective, we conjecture that this maximal property of the Reuleaux
triangle holds for the first eigenvalue of the $p$-Laplacian for any $p\in (1,+\infty)$
(the current paper covers the case $p=1$ whereas the case $p=+\infty$ was already known).
\end{abstract}

\maketitle

\medskip

Keywords: constant width; Cheeger constant; Reuleaux polygons

2010 MSC: 52A10, 
49Q10,
49Q12, 
52A38.

\section{Introduction}
Bodies of constant width (also named after L. Euler {\it orbiforms}) have attracted much attention in the mathematical community along the last centuries. 
Several surveys have been devoted to these objects, and contain an abundant literature.
We refer notably to a chapter in Bonnesen-Fenchel's
famous book \cite{BF}, a survey by Chakerian-Groemer in the book ``Convexity and its applications" \cite{CG}, and the recent book by Martini-Montejano-Olivaros \cite{MMO}.
In the plane, two bodies of constant width
play a particular role: the disk, of course, and the Reuleaux triangle (obtained by
drawing arcs of circle from each vertex of an equilateral triangle between the other two vertices). If all plane bodies of constant width have the same perimeter (this is Barbier's
Theorem), they do not have the same area and the two extreme sets are precisely the disk
(with maximal area by the isoperimetric inequality) and the Reuleaux triangle (with
minimal area).
This last result is the famous Blaschke-Lebesgue Theorem, see \cite{Bla} for the proof of W. Blaschke or \cite{KM} for a more modern exposition, \cite{Le} for the original proof of H. Lebesgue,
and \cite{BF}, where this proof is reproduced. Let us mention that many
other proofs with very different flavours (more geometric or more analytic)
appeared later, for example in \cite{Be}, \cite{CCG},
\cite{Eg}, \cite{Ga}, and \cite{Ha}. The disk and the Reuleaux triangle share this extremal
properties for other geometric functionals like {\it the inradius} and {\it the circumradius}, 
in particular the Reuleaux triangle minimizes the inradius among all bodies
of constant width, see e.g. \cite{BF} or \cite{CG}.

We believe that these extremal properties of the disk and the Reuleaux triangle hold
for more complicated functionals. In particular in Section \ref{conc}, we explain why we think
that the Reuleaux triangle maximizes the first eigenvalue of the $p$-Laplacian (with
Dirichlet boundary condition) for any $p$, $1\leq p\leq +\infty$. Note that it is well known
that the disk (or the ball in any dimension) minimizes this eigenvalue, for any $p$
and the proof is done by spherical rearrangement.

The aim of this paper is to make a first step in this direction by proving that the
Reuleaux triangle maximizes the Cheeger constant among all bodies of constant width. Indeed,
the Cheeger constant (defined below) can also be seen as the first eigenvalue of the
$1$-Laplacian, see \cite{KaFr}.

The Cheeger constant of a bounded plane domain $\Omega$ is defined as
\begin{equation}\label{ch1}
h(\Omega)=\min_{E\subset \Omega} \frac{P(E)}{|E|}
\end{equation}
where $P(E)$ is the perimeter of $E$ (defined as the perimeter in the sense of De Giorgi
for measurable sets) and $|E|$ is the area of $E$. In \eqref{ch1}, the minimum is achieved
as soon as $\Omega$ has a Lipschitz boundary. A set $E$
which realizes this minimum is called a {\it Cheeger set} of $\Omega$ and we denote it by $C_\Omega$. This notion, introduced by Jeff Cheeger in \cite{Che} (to obtain a geometric
lower bound for the first eigenvalue of the Laplacian), has extensively received
attention in the last decades. For an
introductory survey on the Cheeger problem we refer for example to \cite{Pa1}.
In general the Cheeger set is not unique, but it is unique if $\Omega$ is convex,
see \cite{AlCa}. 
Moreover, for convex planar domains, there is a nice characterization 
of the Cheeger constant and the Cheeger set, see e.g. Lachand-Robert and Kawohl \cite{KLR}: the Cheeger constant reads
\begin{equation}\label{defR}
h(\Omega)=\frac{1}{R(\Omega)},\quad \hbox{where $R(\Omega)$ satisfies }\  |\Omega_{-R}|=\pi R^2,
\end{equation}
where $\partial \Omega_{-R}$ is the inner parallel set to $\partial \Omega$ at distance $R$, and the Cheeger set is $C_\Omega=\Omega_{-R(\Omega)}+B_{R(\Omega)}$ (the Minkowski sum of $\Omega_{-R(\Omega)}$ and the disk of radius $R(\Omega)$).

Therefore, the main result of this paper is
\begin{theorem}\label{maintheo}
The Reuleaux triangle maximizes the Cheeger constant in the class of plane bodies
of constant width.
In other words, for any body $\Omega$ of constant width
\begin{equation}\label{ineq1}
h(\Omega)\leq h(\mathbb{T})
\end{equation}
where $\mathbb{T}$ is the Reuleaux triangle of same width.
\end{theorem}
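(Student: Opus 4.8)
The plan is to exploit the explicit formula \eqref{defR} for convex sets, which reduces the problem to a purely geometric inequality. Since $h(\Omega)=1/R(\Omega)$, maximizing the Cheeger constant is equivalent to minimizing the inradius-type quantity $R(\Omega)$ defined by $|\Omega_{-R}|=\pi R^2$. The inequality \eqref{ineq1} therefore becomes $R(\Omega)\geq R(\mathbb{T})$ for every body $\Omega$ of constant width $w$, with equality for the Reuleaux triangle. My first step would be to set up a shape optimization problem: among all bodies of fixed constant width $w$, minimize $R(\Omega)$. A standard approach is to show the minimum exists (the class of bodies of constant width $w$ is compact for the Hausdorff distance, and $\Omega\mapsto R(\Omega)$ is continuous) and then to characterize minimizers via optimality conditions.

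The second step is a reduction to a finite-dimensional problem. Bodies of constant width admit a useful dense subclass: the Reuleaux polygons, obtained as intersections of finitely many disks of radius $w$ centered at the vertices. I would argue, or invoke from the surrounding literature, that an optimal domain must be a Reuleaux polygon — intuitively, because the constraint $|\Omega_{-R}|=\pi R^2$ together with the constant-width constraint leaves boundary freedom that can be used to decrease $R$ unless the boundary consists of circular arcs of radius $w$ meeting at vertices. Once we are on Reuleaux polygons, the inner parallel set $\Omega_{-R}$ is itself a Reuleaux-type polygon with explicitly computable area as a function of the vertex positions and the angles, so the constraint $|\Omega_{-R}|=\pi R^2$ and the objective become explicit algebraic relations.

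The third step is the analysis of the optimality conditions on the space of Reuleaux polygons with a bounded number of arcs, showing that the optimizer degenerates to the Reuleaux triangle. Here I would compute the first variation of $R(\Omega)$ under perturbations that preserve the constant width, and show that any Reuleaux polygon with more than three arcs cannot be optimal, so that vertices coalesce until only three remain. This is precisely the sort of fine analysis the abstract promises, combined with the announced explicit upper bound on the inradius of the optimizer, which serves to rule out the competing configurations quantitatively.

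The hard part, I expect, is the optimality-condition analysis in the third step: proving rigorously that no Reuleaux polygon with four or more arcs can be a local minimizer of $R$. The subtlety is that $R$ is defined implicitly through the area equation $|\Omega_{-R}|=\pi R^2$, so computing its first variation requires differentiating both $R$ and the inner parallel set with respect to the shape, and the inner parallel operation interacts delicately with the corners of a Reuleaux polygon. Controlling the geometry of $\Omega_{-R}$ near the vertices, and obtaining the explicit inradius bound that closes the argument, is where the real work lies; the reduction to polygons and the final identification with $\mathbb{T}$ should follow once that variational machinery is in place.
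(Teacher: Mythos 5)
Your outline reproduces the broad skeleton of the paper (existence, reduction to Reuleaux polygons, first-order optimality conditions, an inradius bound to close the argument), but it contains a genuine gap at the reduction step, and this gap is precisely the point the paper is careful to avoid. You propose to show that the maximizer in the \emph{full} class of bodies of constant width ``must be a Reuleaux polygon,'' arguing informally that otherwise boundary freedom could be used to decrease $R$. No such structure theorem is proved in the paper, and it is not clear how to make your heuristic rigorous: deforming a general constant-width body while preserving the width constraint is delicate (width-preserving perturbations come in pairs of antipodal boundary points), and the claim that non-polygonal boundaries always admit an improving perturbation is exactly the kind of regularity statement that is hard to establish for implicitly defined functionals like $R$. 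The paper sidesteps this entirely: it maximizes $h$ within each subclass $\mathcal B^1_N$ of Reuleaux polygons with at most $2N+1$ sides (where compactness gives existence cheaply), proves that the maximizer there is always the Reuleaux triangle, and then passes to the full class by density of Reuleaux polygons in $\mathcal B^1$ together with continuity of $h$ for the Hausdorff metric. With that ordering of quantifiers you never need to know anything about the structure of the optimizer in $\mathcal B^1$; your ordering requires it, and that requirement is not discharged by your sketch.

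Two further points where your plan diverges from what actually makes the proof work. First, you worry about differentiating $R$ through the implicit relation $|\Omega_{-R}|=\pi R^2$; the paper avoids this by using the known shape-derivative formula for the Cheeger constant itself (an integral over $\partial C_\Omega\cap\partial\Omega$ of $(\mathcal C_\Omega-h)\,V\cdot n$), evaluated on the specific Blaschke deformations that slide one vertex along its arc while preserving constant width. Second, the elimination of polygons with five or more sides is not a local-minimality or ``vertices coalesce'' argument: it is a contradiction between two quantitative facts, namely (i) the first-order conditions force consecutive arc lengths to have ratio close to $1$, which in turn forces the inradius of a critical polygon to exceed $r_0=0.4302$, and (ii) any maximizer must have inradius at most $r_0$, a bound obtained from the explicit minimal-area function $\mathcal A(r)$ (the least area of a constant-width body with prescribed inradius, from the authors' companion paper). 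Your proposal gestures at the inradius bound but gives no mechanism for producing either side of this contradiction, and without the arc-length rigidity estimate (the quantity $\tau\geq 0.99-0.05h^2$ in the paper) the argument does not close.
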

Our strategy of the proof is as follows. Without loss of generality, we work with bodies of width 1. First of all, we look at this maximization problem
in the restricted class of Reuleaux polygons (with a number of sides less than $2N+1$). We will then generalize the result, exploiting the density of the Reuleaux polygons in the class of bodies with constant width.
We begin with a simple observation on the inradius of the optimal domain: it must be small, more precisely, smaller than $r_0\simeq 0.4305$. Note that the minimal value, obtained by the Reuleaux triangle, is $r_{min}=1-1/\sqrt{3}\simeq 0.4226$.
The key point to get such a precise estimate is the explicit computation of the minimal
area of a body of constant width enclosed in a given annulus, that we obtained in a
recent paper, see the Appendix and reference \cite{HL}.

Now, in the class of Reuleaux polygons, after having proved existence of
a maximizer, we obtain optimality conditions, thanks to the so-called {\it shape derivative}.
For that purpose, we consider only a particular kind of perturbations allowing us to
stay in the same class. These perturbations may be defined for any Reuleaux polygon
(except the Reuleaux triangle) and have been used by W. Blaschke in his proof of the Blaschke-Lebesgue
Theorem. They consist in sliding one vertex on its arc, moving that way
three corresponding arcs of the polygon in order to respect the constant width 
condition and letting all the other arcs unchanged. The optimality condition
we get is rather complicated, but it allows us to prove, through
a precise analysis of the functions involved, that the optimal domain has arcs
with very similar lengths: in Theorem \ref{theolengths} we give an estimate of
the ratio of the lengths of two consecutive arcs that happens to be close to 1.
To conclude, we are able to use this property of the lengths to prove that the inradius
of such Reuleaux polygon must be larger than $r_0$, first with a general proof in
the case $N\geq 7$, then for all the remaining values of $N=2,3,4,5,6$ by a simple analysis. This proves that the optimal
Reuleaux polygon cannot have more than 3 sides.

\medskip
In this paper, we define $\mathcal{B}^1$ as the class of plane bodies of constant width $1$ and $\mathcal{B}_N^1$ as the subclass
of Reuleaux polygon with (at most) $2N+1$ sides.
Throughout the paper we will always take the origin at the center of the incircle.

\section{Existence and a first optimality condition}
\subsection{Existence}
First of all, we show that the functional $h$ is bounded above in $\mathcal B^1$
by explicit bounds.

A first upper bound comes from two classical theorems: the Barbier Theorem (see, e.g. \cite{CG}) and the Blaschke-Lebesgue Theorem (see, e.g. \cite{Bla}). The former states that the perimeter of any plane body of constant width $1$ is $\pi$, the latter asserts that the Reuleaux triangle minimizes the area among plane bodies of constant width. By definition of $h$, we immediately get
\begin{equation}\label{harea}
h(\Omega)\leq \frac{\pi}{|\Omega|} \leq \frac{\pi}{|\mathbb{T}|}=\frac{2\pi}{\pi-\sqrt{3}}\sim 4.4576.
\end{equation}

Another possible strategy to get the boundedness is to exploit the monotonicity of $h$ with respect to the inclusion, together with the fact that $\mathbb T$ minimizes the inradius $r(\Omega)$ in $\mathcal B^1$ (see, e.g. \cite{BF}):
\begin{equation}\label{hrho}
h(\Omega) \leq h(B(0,r(\Omega))) = \frac{2}{r(\Omega)}\leq \frac{2}{r(\mathbb{T})}=\frac{2}{1-1/\sqrt{3}}\sim 4.732.
\end{equation}

\begin{proposition}\label{exist} The functional $h$ admits a maximizer in $\mathcal B^1$.
\end{proposition}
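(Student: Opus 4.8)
The plan is to establish existence by the direct method of the calculus of variations, so the first task is to identify a suitable compact class on which the functional $h$ is upper semicontinuous. I would take a maximizing sequence $(\Omega_n)\subset\mathcal{B}^1$, meaning $h(\Omega_n)\to\sup_{\mathcal{B}^1}h$, where the supremum is finite by either bound \eqref{harea} or \eqref{hrho}. All bodies of constant width $1$ are convex, have diameter $1$, and contain a ball of radius $r(\mathbb{T})=1-1/\sqrt3$; hence, after translating each $\Omega_n$ so that the origin sits at the incenter, the sequence is uniformly bounded and nondegenerate. By the Blaschke selection theorem, a subsequence (not relabelled) converges in the Hausdorff metric to some convex body $\Omega$.

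Second, I would check that the limit $\Omega$ still belongs to $\mathcal{B}^1$. Constant width is preserved under Hausdorff convergence: the width function $w_{\Omega_n}(\theta)=h_{\Omega_n}(\theta)+h_{\Omega_n}(\theta+\pi)$, written through the support functions $h_{\Omega_n}$, converges uniformly to $w_\Omega$, so $w_\Omega\equiv 1$ and $\Omega\in\mathcal{B}^1$. In particular $\Omega$ is a genuine convex body (it cannot degenerate to a segment or a point, since its width is everywhere $1$), and it has Lipschitz—indeed $C^{1,1}$—boundary, so that its Cheeger constant is attained and the characterization \eqref{defR} applies.

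Third, I would pass to the limit in the functional. The cleanest route is through the explicit formula \eqref{defR}: for convex bodies, $h(\Omega)=1/R(\Omega)$ where $R(\Omega)$ is determined by $|\Omega_{-R}|=\pi R^2$. Hausdorff convergence of convex bodies implies convergence of areas, $|\Omega_n|\to|\Omega|$, and more generally stability of the inner parallel sets and their areas with respect to the parameter $R$; consequently the defining relation passes to the limit and $R(\Omega_n)\to R(\Omega)$, whence $h(\Omega_n)\to h(\Omega)$. It then follows that $h(\Omega)=\sup_{\mathcal{B}^1}h$, so $\Omega$ is the desired maximizer. Alternatively, one may argue directly from the definition \eqref{ch1} using the lower semicontinuity of perimeter and continuity of area under Hausdorff convergence of the competitor sets, which yields upper semicontinuity of $h$ and the same conclusion.

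I expect the main obstacle to lie in the third step, namely justifying the continuity of $R(\cdot)$—equivalently of the inner parallel set construction—under Hausdorff convergence. The map $\Omega\mapsto R(\Omega)$ is defined implicitly, and while the areas of the inner parallel sets depend continuously on both $\Omega$ and $R$, one must rule out any collapse of $\Omega_{-R}$ near the critical radius and verify that the unique solution $R(\Omega_n)$ of $|\Omega_{n,-R}|=\pi R^2$ depends continuously on $n$. The uniform nondegeneracy coming from the inradius lower bound $r(\mathbb{T})$ is what prevents such pathologies, and exploiting it carefully (or, equivalently, invoking the convexity-based uniqueness and continuity results already cited in the introduction) is the crux of the argument.
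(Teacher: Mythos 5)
Your proposal is correct and follows essentially the same route as the paper: finiteness of the supremum via \eqref{harea} or \eqref{hrho}, Blaschke selection on a maximizing sequence, closedness of $\mathcal B^1$ through uniform convergence of support functions, and continuity of $h$ under Hausdorff convergence --- the only difference being that the paper disposes of this last step by citing \cite[Proposition 3.1]{Pa2}, whereas you sketch it yourself through the implicit formula \eqref{defR} (a sketch that is viable, with the monotonicity of $R\mapsto|\Omega_{-R}|-\pi R^2$ giving uniqueness and convergence of the roots). One minor slip: bodies of constant width need not have $C^{1,1}$ boundary (Reuleaux polygons have corners), but this is harmless since the Lipschitz regularity automatic for convex bodies is all that the attainment of the Cheeger minimum and \eqref{defR} require.
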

\begin{proof}
In \eqref{harea} (or \eqref{hrho}) we have shown that $h$ is bounded above in the class. Therefore, its supremum is finite. Let $\Omega_n$ be a maximizing sequence. Since the elements of $\mathcal B^1$ are convex bodies with prescribed constant width, we infer that they can all be enclosed into a compact set. Therefore, by Blaschke selection theorem, up to a subsequence (not relabeled), $\Omega_n\to \Omega^*$ with respect to the Hausdorff metric, for some convex body $\Omega^*$. Now it is classical that the class $\mathcal B^1$ is closed for the
Hausdorff metric (Hausdorff convergence is equivalent to uniform convergence of the support functions), thus $\Omega^*\in \mathcal B^1$. To conclude, we exploit the continuity of $h$ with respect to the Hausodrff metric. This is proved, e.g., in \cite[Proposition 3.1]{Pa2}.   
\end{proof}

Actually, the same existence result can be proved in the subclass $\mathcal B_N^1$ of Reuleaux polygons with at most $2N+1$ sides.

\begin{proposition}\label{existN} For every $N\in \mathbb N$, the functional $h$ admits a maximizer in $\mathcal B^1_N$.
\end{proposition}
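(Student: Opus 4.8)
The plan is to mirror the existence proof for Proposition \ref{exist}, taking advantage of the fact that the only change is passing from $\mathcal B^1$ to its subclass $\mathcal B^1_N$. First I would note that $h$ is bounded above on $\mathcal B^1_N \subset \mathcal B^1$ by the very same bounds \eqref{harea} and \eqref{hrho}, so the supremum over $\mathcal B^1_N$ is finite; hence a maximizing sequence $\Omega_n \in \mathcal B^1_N$ exists. Since every element of $\mathcal B^1_N$ is a convex body of constant width $1$, the sequence is uniformly bounded, and by the Blaschke selection theorem it converges, up to a (non-relabeled) subsequence, in the Hausdorff metric to some convex body $\Omega^*$ of constant width $1$.

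The one genuinely new point, and the step I expect to be the main obstacle, is showing that the limit $\Omega^*$ still belongs to the subclass $\mathcal B^1_N$, i.e.\ that the property of being a Reuleaux polygon with at most $2N+1$ sides is preserved under Hausdorff convergence. The difficulty is that arcs can degenerate in the limit: a vertex may slide into an adjacent vertex, so that an arc shrinks to a point and the number of sides drops. The right way to handle this is to observe that the bound is \emph{at most} $2N+1$ sides, so degeneration is harmless---it can only decrease the number of arcs, keeping us inside the class. Concretely, I would encode a Reuleaux polygon with at most $2N+1$ sides by its (cyclically ordered) list of at most $2N+1$ vertices lying on the boundary, the arcs being determined by the constant-width constraint; after passing to a further subsequence one may assume the number of vertices is constant equal to some $k \le 2N+1$, and that each of the $k$ vertices converges. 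The limiting configuration of (possibly coincident) vertices then determines $\Omega^*$ as a Reuleaux polygon with at most $k \le 2N+1$ sides, so $\Omega^* \in \mathcal B^1_N$.

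Finally, having established that $\Omega^* \in \mathcal B^1_N$, I would invoke the continuity of $h$ with respect to the Hausdorff metric, exactly as in the proof of Proposition \ref{exist} (via \cite[Proposition 3.1]{Pa2}), to conclude that $h(\Omega^*) = \lim_n h(\Omega_n) = \sup_{\mathcal B^1_N} h$, so that $\Omega^*$ is the desired maximizer. In writing this up I would most likely organize it as: ``The proof follows the lines of Proposition \ref{exist}; the only additional fact to check is that $\mathcal B^1_N$ is closed for the Hausdorff metric,'' and then give the vertex-tracking argument above to establish that closedness. The continuity of $h$ and the boundedness are then quoted verbatim from the previous proposition.
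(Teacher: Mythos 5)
Your proposal is correct and follows essentially the same route as the paper: boundedness of $h$ from above, compactness arguments via Blaschke selection, and continuity of $h$ in the Hausdorff metric. The only difference is that the paper dispatches the key point you identified---that $\mathcal B^1_N$ is closed (indeed compact) under Hausdorff convergence---by citing \cite[Proposition 2.2]{KM}, whereas you sketch a direct vertex-tracking proof of it; your sketch is sound, since degeneration of arcs only lowers the side count and the class only requires \emph{at most} $2N+1$ sides.
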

\begin{proof} Arguing as in the proof of Proposition \ref{exist}, the statement follows by combining the boundedness of $h$ from above, the compactness of $\mathcal B^1_N$ with respect to the Hausdorff metric (see \cite[Proposition 2.2]{KM}), and the continuity of $h$ with respect to the Hausdorff metric.
\end{proof}

\subsection{The Cheeger constant of a Reuleaux triangle}
In this paragraph we compute $h(\mathbb T)$ using the implicit formula \eqref{defR}. We recall that the boundary of the Reuleaux triangle is formed by three arcs of circle of radius 1 and arc length $\pi/3$, centered at three boundary points $P_1$, $P_2$, and $P_3$. Without loss of generality, we choose the orientation in such a way that
\begin{equation}\label{P123}
P_1=\frac{1}{\sqrt{3}}\,e^{i 11 \pi /6},\quad P_2=\frac{1}{\sqrt{3}}\,e^{i \pi /2},\quad P_3= \frac{1}{\sqrt{3}}\,e^{i 7 \pi /6}.
\end{equation}
Given an arbitrary $0<R<1$, the boundary of the inner parallel set $\Omega_{-R}$ is made of three arcs of circle, centered at the $P_i$, with radius $1-R$. They meet at three points $Q_i$, $i=1,2,3$, which, by symmetry, lie on the segments $P_iO$, being $O$ the origin.

\begin{figure}[h]                                             
\begin{center}                                                
{\includegraphics[height=4truecm] {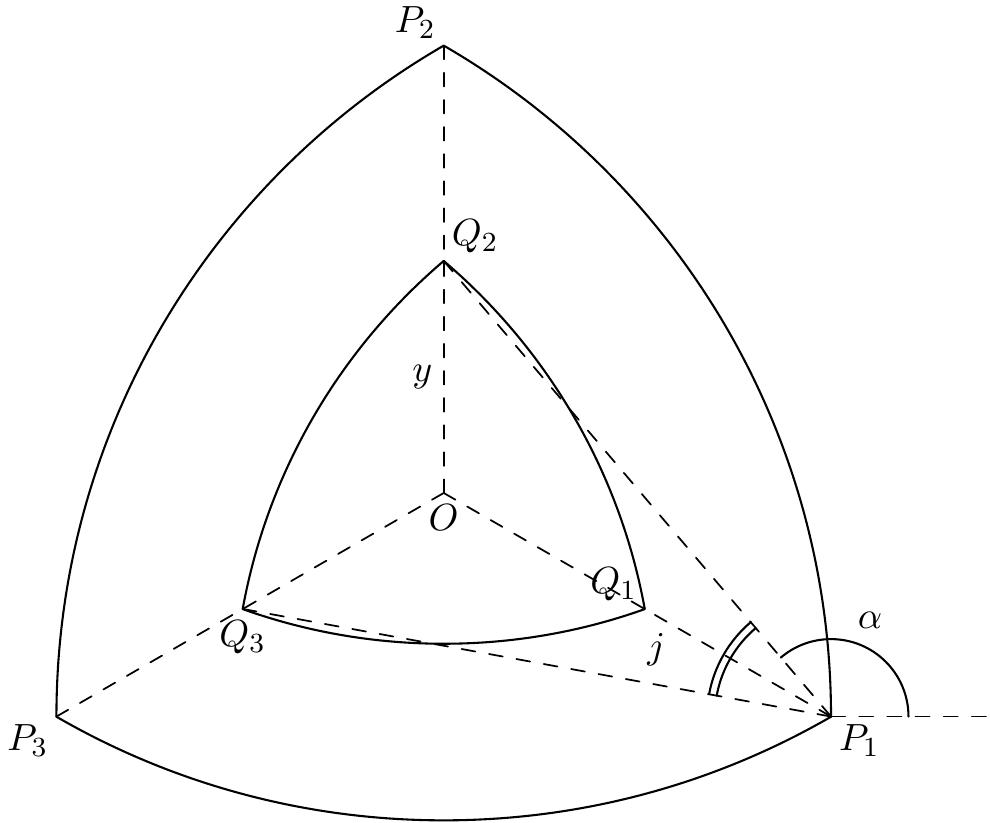}   \quad \quad   \quad   \includegraphics[height=4truecm] {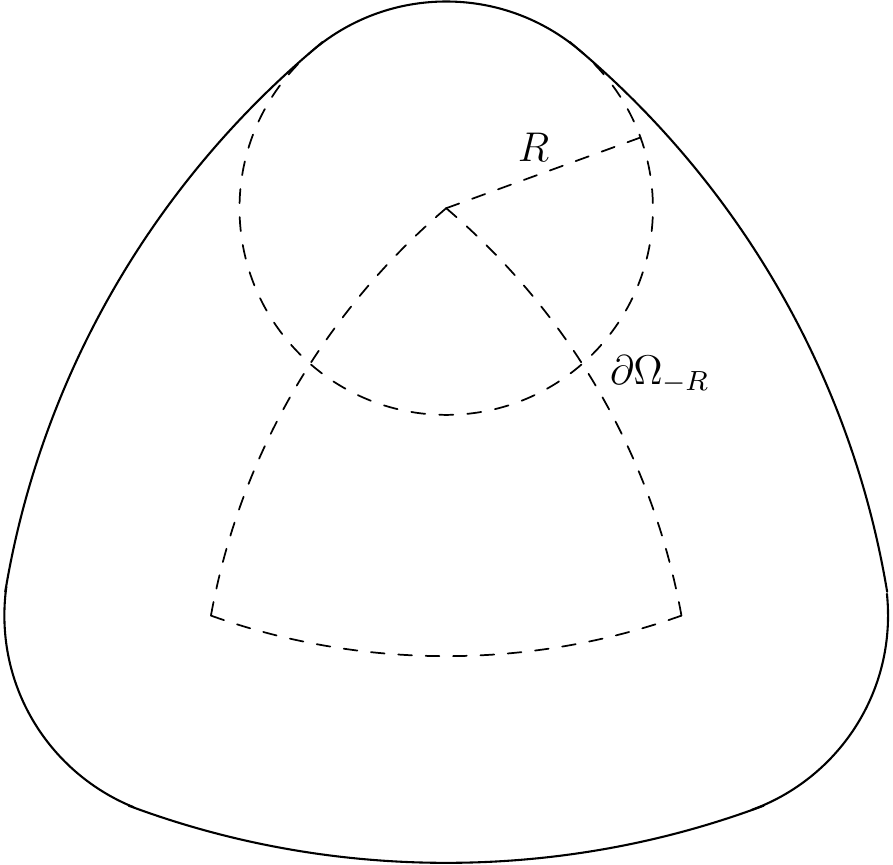}     }                                    
\end{center}                                                  
\caption{{\it Left: the Reuleaux triangle and an inner parallel set. Right: the Cheeger set of the Reuleaux triangle and the inner parallel set.}}\label{fig-parallel3} 
\end{figure}  

In order to determine the area of the inner parallel set, we need to compute the following objects: the angle $\alpha$ such that  $Q_2=P_1+(1-R) e^{i\alpha}$, the distance $y:=|OQ_2|$, and the angle $j:=\widehat{Q_2P_1Q_3}$ (see also Fig. \ref{fig-parallel3}). Recalling formulas \eqref{P123} and imposing that the horizontal coordinate of $Q_2$ is zero, we get
$$
\alpha=\arccos \left(- \frac{1}{2(1-R)}\right).
$$
Similarly, evaluating the vertical component of $Q_2$, we obtain
$$
y=(1-R)\sin\alpha -\frac{1}{2\sqrt{3}}\,. 
$$
Finally, it is immediate to check that $j=2(5\pi/6-\alpha)$.
Let us now compute the area. Connecting each $Q_i$ with the origin, the inner parallel set $\Omega_{-R}$ is divided into three parts of equal area, and we have
$$
|\Omega_{-R}|=\frac{3}{2} \left[ \frac{\sqrt{3}}{2}y^2 + (1-R)^2(j - \sin j)\right].
$$
Imposing \eqref{defR}, namely that $|\Omega_{-R}|=\pi R^2$, we find
\begin{equation}\label{Rtriangle}
0.22802 \leq R=R(\mathbb{T}) \leq 0.22803,
\end{equation}
implying
\begin{equation}\label{htriangle}
h(\mathbb{T})\geq 4.3853.
\end{equation}

\subsection{A first optimality condition}
The knowledge of $r(\mathbb T)$ and the computation of $h(\mathbb T)$ and $R(\mathbb T)$ allow us to get some necessary conditions on the values of the functionals $r$ and $R$ for maximizers.

\begin{proposition}\label{propr0}
Let $\Omega^*$ be a maximizer for $h$ in $\mathcal B^1$. Then
\begin{eqnarray}
0.21132 \leq \frac{r(\mathbb T)}{2} \leq &\!\!\!\! R(\Omega^*)\!\!\!\!&\leq R(\mathbb T) \leq 0.22803, \label{estimatesR}
\\
\smallskip\notag
\\
0.4226 \leq r(\mathbb T) \leq & \!\!\!\!r(\Omega^*)\!\!\!\!&\leq r_0:=0.4302. \label{r0}
\end{eqnarray} 
\end{proposition}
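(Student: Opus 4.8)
The plan is to establish the four inequalities in Proposition~\ref{propr0} by combining the previously computed value $h(\mathbb{T})$ with the characterization~\eqref{defR} of the Cheeger constant for convex bodies. Since $\Omega^*$ is a maximizer, we have $h(\Omega^*)\geq h(\mathbb{T})$, and by~\eqref{defR} the Cheeger constant is the reciprocal of $R$, so $h(\Omega^*)\geq h(\mathbb{T})$ translates immediately into $R(\Omega^*)\leq R(\mathbb{T})$. The numerical bound $R(\mathbb{T})\leq 0.22803$ was already obtained in~\eqref{Rtriangle}, giving the rightmost inequality of~\eqref{estimatesR}. The leftmost inequality, $R(\Omega^*)\geq r(\mathbb{T})/2 \geq 0.21132$, should follow from a lower bound relating $R$ to the inradius valid for any convex body: since the inner parallel set at distance $R$ must be nonempty (indeed $|\Omega_{-R}|=\pi R^2>0$), one needs $R < r(\Omega^*)$, but to get a bound in the correct direction I would instead use a general geometric inequality such as $R(\Omega)\geq r(\Omega)/2$, which holds because the Cheeger set contains a ball of radius $R$ while the whole domain contains a ball of radius $r$; combined with $r(\Omega^*)\geq r(\mathbb{T})$ (as $\mathbb{T}$ minimizes the inradius) this yields the lower bound.

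For~\eqref{r0}, the leftmost inequality $r(\Omega^*)\geq r(\mathbb{T})\geq 0.4226$ is just the fact, recalled in the introduction and cited from~\cite{BF}, that the Reuleaux triangle minimizes the inradius in $\mathcal{B}^1$, together with the numerical value $r(\mathbb{T})=1-1/\sqrt{3}\simeq 0.4226$. The substantive content is the upper bound $r(\Omega^*)\leq r_0 = 0.4302$. This is where I expect the main work to lie, and the excerpt itself signals that the key input is the explicit lower bound on the minimal area of a body of constant width enclosed in a given annulus, proved in~\cite{HL} and recalled in the Appendix. The idea is that a large inradius forces the body to contain a large disk; combined with the constant-width constraint, the body is squeezed into an annulus whose inner radius is $r(\Omega^*)$ and outer radius is the circumradius (which for width $1$ is controlled). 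The area-in-annulus estimate then gives a lower bound on $|\Omega^*|$ that grows with $r(\Omega^*)$.

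The mechanism to extract the upper bound on $r$ is then the following: by~\eqref{defR}, $|\Omega^*_{-R(\Omega^*)}| = \pi R(\Omega^*)^2$, and one can relate $|\Omega^*|$ to $|\Omega^*_{-R}|$ via the standard formula for the area of inner parallel sets of convex bodies, namely $|\Omega^*| = |\Omega^*_{-R}| + R\,P(\Omega^*_{-R}) + \pi R^2$ or an inequality of that type, using Barbier's theorem $P(\Omega^*)=\pi$. Since we already have the two-sided bound on $R(\Omega^*)$ from~\eqref{estimatesR}, this pins down $|\Omega^*|$ from above in terms of known constants. On the other hand, the annulus area estimate from~\cite{HL} gives a lower bound for $|\Omega^*|$ that is an increasing function of $r(\Omega^*)$. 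Setting the upper bound on the area against this increasing lower bound produces an inequality in $r(\Omega^*)$ alone, and solving it numerically should yield exactly $r(\Omega^*)\leq r_0 = 0.4302$.

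The main obstacle, as I see it, is twofold. First, one must carefully set up the annulus estimate: the relevant annulus has inner radius $r(\Omega^*)$ (the incircle) and the correct outer radius must be identified from the width-$1$ and inradius data, so that the hypotheses of the theorem from~\cite{HL} are met; getting the geometry exactly right, and checking that the incircle is concentric or otherwise correctly positioned, is delicate. Second, the final step is a numerical balancing: one must verify that the monotone lower bound for the area, evaluated at $r=r_0$, already exceeds the area upper bound forced by~\eqref{estimatesR}, so that no maximizer can have inradius above $r_0$. The tightness here is real — the gap between $r_{\min}\simeq 0.4226$ and $r_0\simeq 0.4302$ is small — so the estimates must be sharp enough to close, which is precisely why the sharp area-in-annulus result of~\cite{HL}, rather than a cruder bound, is indispensable.
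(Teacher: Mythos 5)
Your overall architecture coincides with the paper's own proof. For \eqref{estimatesR} you argue exactly as the paper does: $R(\Omega^*)=1/h(\Omega^*)\leq 1/h(\mathbb T)=R(\mathbb T)$ together with \eqref{Rtriangle}, and the lower bound via $R(\Omega)\geq r(\Omega)/2$ combined with the minimality of $r(\mathbb T)$ for the inradius. One caveat: your justification of $R(\Omega)\geq r(\Omega)/2$ (``the Cheeger set contains a ball of radius $R$ while the domain contains a ball of radius $r$'') is not a proof --- those two containments by themselves imply nothing about $R$ versus $r$. The correct one-line argument is the one behind \eqref{hrho}: the incircle $B(0,r(\Omega))\subset\Omega$ is an admissible competitor (equivalently, $h$ is antitone under inclusion), so $h(\Omega)\leq 2/r(\Omega)$, i.e.\ $R(\Omega)\geq r(\Omega)/2$. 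For \eqref{r0}, your plan is also the paper's plan: compare the minimal-area-with-prescribed-inradius function of \cite{HL} (the paper's $\mathcal A$, which is strictly increasing) against an upper bound on $|\Omega^*|$ forced by $h(\Omega^*)\geq h(\mathbb T)$, then invert $\mathcal A$. Your worry about positioning the annulus is also immaterial: for a body of constant width $1$ the incircle and circumcircle are automatically concentric with radii summing to $1$, so prescribing the inradius prescribes the annulus.

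The genuine flaw is the mechanism you propose for the area upper bound. The identity $|\Omega^*|=|\Omega^*_{-R}|+R\,P(\Omega^*_{-R})+\pi R^2$ is false for $\Omega^*$: it is the Steiner formula for the Minkowski sum $\Omega^*_{-R}+B_R$, i.e.\ for the Cheeger set, which is a \emph{proper} subset of any Reuleaux polygon (it misses the corners), so it only bounds $|\Omega^*|$ from \emph{below}, which is useless here. The natural correct inequality along your route, namely $|\Omega^*|=|\Omega^*_{-R}|+\int_0^R P(\Omega^*_{-t})\,\mathrm{d}t\leq \pi R^2+R\,P(\Omega^*)=\pi R^2+\pi R\approx 0.88$, is quantitatively empty: every body of width $1$ has area at most $\pi/4\approx 0.785$, so this bound can never meet the \cite{HL} lower bound --- and, as you yourself stress, the numbers here are tight. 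What closes the argument (and is what the paper actually does, cf.\ \eqref{harea} and \eqref{a1}) is the direct competitor bound: taking $E=\Omega^*$ in \eqref{ch1} and using Barbier's theorem, $h(\Omega^*)\leq P(\Omega^*)/|\Omega^*|=\pi/|\Omega^*|$, hence $|\Omega^*|\leq \pi/h(\Omega^*)\leq \pi/h(\mathbb T)=\pi R(\mathbb T)\leq 0.7165$. Then $\mathcal A(r(\Omega^*))\leq|\Omega^*|\leq \pi/h(\mathbb T)$ and the strict monotonicity of $\mathcal A$ give $r(\Omega^*)\leq \mathcal A^{-1}\bigl(\pi/h(\mathbb T)\bigr)<r_0$. (If you insist on the parallel-set route, you can recover $|\Omega^*|\leq\pi R$ by using the sharper estimate $P(\Omega^*_{-t})\leq \pi-2\pi t$, but the competitor bound is simpler and is all that is needed.)
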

\begin{proof}
Let us start with $R$. By definition, $R(\Omega^*)=1/h(\Omega^*)\leq 1/h(\mathbb T) = R(\mathbb T)$. On the other hand, exploiting \eqref{hrho}, we get $R(\Omega^*)\geq r(\mathbb T)/2=(1-1/\sqrt{3})/2$. These inequalities, together with \eqref{Rtriangle}, prove \eqref{estimatesR}.

As already mentioned, the proof of the minimality of $\mathbb T$ for the inradius can be found in \cite{BF}, in particular $r(\mathbb T)\leq r(\Omega^*)$. In order to prove the upper bound for $r(\Omega^*)$, we introduce the auxiliary function 
\begin{equation}\label{A}
\begin{array}{lll}
\mathcal A:[1-1/\sqrt{3}; 1/2] & \longrightarrow \mathbb R^+
\\
&r \mapsto \mathcal A(r):=\min \left\{|\Omega|\ :\ \Omega\in \mathcal B^1,\ r(\Omega)=r \right\}.
\end{array}
\end{equation}
In other words, $\mathcal A$ associates to $r$ the minimal area of a shape in $\mathcal B^1$ with prescribed inradius. Note that the endpoints of the domain of $\mathcal A$ are the minimal and maximal inradius of shapes in $\mathcal B^1$. The properties of $\mathcal A$ and of the optimal shapes are investigated in \cite{HL}. For the benefit of the reader, the main facts are gathered in the Appendix, in the last section of the paper.

In view of definition \eqref{A} of $\mathcal A$, for every shape $\Omega$ in the class, we have $|\Omega| \geq \mathcal A(r(\Omega))$, so that, arguing as in \eqref{harea}, 
\begin{equation}\label{a1}
h(\Omega)\leq \frac{\pi}{\mathcal A(r(\Omega))}.
\end{equation}
On the other hand, for $\Omega^*$ maximizer, there holds $h(\Omega^*)\geq h(\mathbb T)$. This fact, combined with \eqref{a1}, gives 
$$
\mathcal A(r(\Omega^*))\leq \frac{\pi}{h(\mathbb T)}.
$$
Since $\mathcal A$ is strictly increasing, we infer that 
$$
r(\Omega^*) \leq \mathcal A^{-1} \left( \frac{\pi}{h(\mathbb T)}\right) <r_0:=0.4302,
$$
concluding the proof.
\end{proof}

\section{Optimality conditions in the class of Reuleaux polygons}
In this section we write a family of optimality conditions in the class of Reuleaux polygons, namely for the study of the maximization of $h$ in $\mathcal B_N^1$. To this aim, we need to fix some definitions. 

\subsection{Reuleaux polygons}

The boundary of a Reuleaux polygon $\Omega$ of width 1 is made of an odd number of arcs of radius 1, centered at boundary points $P_k$, $k=1,\ldots, 2N+1$, for some $N\in \mathbb N$. Notice that in this case $\Omega \in \mathcal B^1_M$ for every $M\geq N$. The boundary arc centered at $P_k$ is denoted by $\gamma_k$ and is parametrized by
\begin{equation}\label{param}
\gamma_k:=\{P_k + e^{is}\ :\ s\in [\alpha_k, \beta_k]\},
\end{equation}
for some pair of angles $\alpha_k, \beta_k$. We identify here the complex number $e^{is}$ with the point $(\cos s, \sin s)\in \mathbb R^2$. 
For brevity, we set
\begin{equation}\label{defj}
j_k:=\mathcal H^1(\gamma_k)\quad \mbox{(the length of $\gamma_k$)}.
\end{equation}
The vertexes are ordered as follows: the subsequent and previous points of $P_k$ are
$$
P_{k+1}=P_k + e^{i\alpha_k}\quad \hbox{and}\quad P_{k-1}=P_{k} + e^{i\beta_k},
$$
respectively. Accordingly, the angles satisfy
$$
\beta_{k+1}= \alpha_k +\pi \,\quad \hbox{mod }2\pi.
$$

The concatenation of the parametrizations of the arcs provides a parametrization of the boundary of the Reuleaux polygon in counter clockwise sense: the order is $\gamma_{2N+1}$, $\gamma_{2N-1}$, $\ldots$, $\gamma_{1}$, $\gamma_{2N}$, $\gamma_{2N-2}$, $\ldots$, $\gamma_2$, namely first the arcs with odd label followed by the arcs with even label, see e.g., Fig. \ref{figR}.

\begin{figure}[h]                                             
\begin{center}                                                
{\includegraphics[height=4.5truecm] {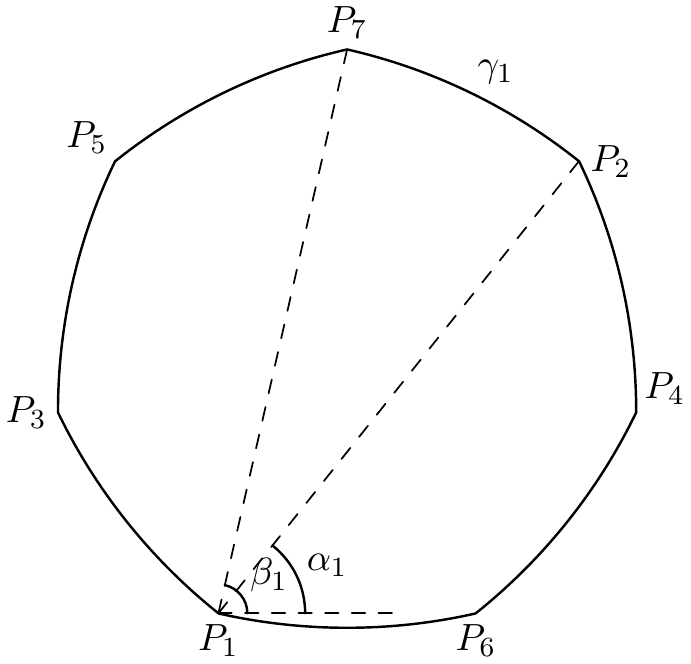}}                                                
\end{center}                                                  
\caption{{\it Notation of vertexes, arcs, and angles for the parametrization of a Reuleaux heptagon.}}\label{figR} 
\end{figure}

\subsection{The Cheeger set of a Reuleaux polygon}\label{ssR}
Let $\Omega$ be a Reuleaux polygon. According to \cite{KLR}, the boundary of the Cheeger set $C_\Omega$ is the union of a (non empty) portion of $\partial \Omega$ and arcs of circle of radius $R:=R(\Omega)$. Moreover, the arcs of circle meet $\partial \Omega$ tangentially. 

In view of the geometry of $\Omega$, we infer that the intersection $\partial C_\Omega \cap \partial \Omega$ is the union of arcs of circle of radius 1 of the form $\gamma_\ell':=\partial C_\Omega \cap \gamma_\ell$. We parametrize them as follows: 
\begin{equation}\label{arcprime}
\gamma_\ell'= \{P_\ell+ e^{is}\ :\ s\in [\alpha_\ell', \beta_\ell']\},
\end{equation}
for suitable $\ell$s and $\alpha_\ell\leq \alpha_\ell'\leq \beta_\ell'\leq \beta_\ell$.

Notice that, a priori, there might be an index for which $\gamma_\ell'=\emptyset$.

\medskip

We now show that the fact that the ``free part'' of the boundary of $C_\Omega$ meets $\partial \Omega$ tangentially entails a relation among the contact angles, the lengths of the arcs, and $R$. More precisely, let us assume that $\partial C_\Omega$ intersects two consecutive arcs: $\gamma_\ell$ and $\gamma_{\ell-2}$. In a neighborhood of their common point $P_{\ell-1}$, the boundary of the Cheeger set is the concatenation of $\gamma_\ell'$, the arc of circle
$$
\{ Q + R e^{is}\ :\ s\in [\beta_\ell', \alpha_{\ell-2}']\},
$$
and $\gamma_{\ell-2}'$. The point $Q$ is the intersection of the segments joining $P_\ell$ with the contact point $P_\ell + e^{i \beta_\ell'}$ and $P_{\ell-2}$ with $P_{\ell-2}+e^{i\alpha_{\ell-2}'}$. Let $M$ denote the midpoint of the segment $P_\ell P_{\ell-2}$. This structure is summarized in Fig. \ref{ballR}.

\begin{figure}[h]                                             
\begin{center}                                                
{\includegraphics[height=5truecm] {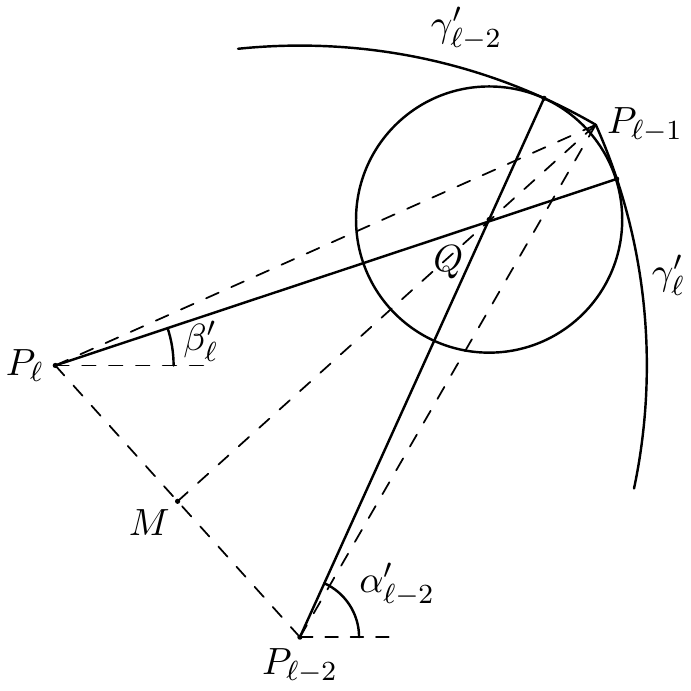}}                                                
\end{center}                                                  
\caption{{\it The boundary of the Cheeger set in a neighborhood of $P_{\ell-1}$.}}\label{ballR} 
\end{figure}                
Note that the angle $\widehat{P_\ell P_{\ell-1} P_{\ell-2}}$ is equal to $\alpha_{\ell-2}-\beta_\ell= j_{\ell-1}$. 
Let us write the point $Q$ in two different ways:
\begin{equation}\label{systemQ}
\left\{
\begin{array}{lll}
Q=P_\ell + (1-R)e^{i \beta_\ell'}
\\
Q= P_\ell + e^{i\beta_\ell} + |QP_{\ell-1}|e^{i (\beta_\ell+\pi + j_{\ell-1}/2)}.
\end{array}
\right.
\end{equation}
In order to compute $|QP_{\ell-1}|$, let us introduce the auxiliary function
\begin{equation}\label{defU}
U(x):= \arcsin (\sin(x)/\sqrt{a}),
\end{equation}
being $a:=(1-R)^2$. According to this notation, we infer that the angle $\widehat {P_\ell Q M}=\widehat{M Q P_{\ell-2}}$ is nothing but $U(j_{\ell-1}/2)$. Therefore, we may write
$$
|QP_{\ell-1}|=|MP_{\ell-1}| - |MQ|= \cos(j_{\ell-1}/2) - \sqrt{a}\cos(U(j_{\ell-1}/2)).
$$
By combining the previous expression with \eqref{systemQ}, we conclude that
\begin{equation}\label{proj1}
\left\{
\begin{array}{lll}
(1-R)\cos(\beta_\ell')&=\cos(\beta_\ell) - \left[\cos(j_{\ell-1}/2) - \sqrt{a}\cos(U(j_{\ell-1}/2))\right]\cos(\beta_\ell+j_{\ell-1}/2)
\\
(1-R)\sin(\beta_\ell')&=\sin(\beta_\ell) -\left[\cos(j_{\ell-1}/2) - \sqrt{a}\cos(U(j_{\ell-1}/2))\right]\sin(\beta_\ell+j_{\ell-1}/2).
\end{array}
\right.
\end{equation}
Similarly, exploiting the fact that
$$
\left\{
\begin{array}{lll}
Q=P_{\ell-2} + (1-R)e^{i \alpha_{\ell-2}'}
\\
Q= P_{\ell-2} + e^{i\alpha_{\ell-2}} + |QP_{\ell-1}|e^{i (\alpha_{\ell-2}+\pi - j_{\ell-1}/2)}
\end{array}
\right.
$$
we get
\begin{equation}\label{proj2}
\left\{
\begin{array}{lll}
(1-R)\cos(\alpha_{\ell-2}')&=\cos(\alpha_{\ell-2}) - \left[\cos(j_{\ell-1}/2) - \sqrt{a}\cos(U(j_{\ell-1}/2))\right]\cos(\alpha_{\ell-2} - j_{\ell-1}/2)
\\
(1-R)\sin(\alpha_{\ell-2}')&=\sin(\alpha_{\ell-2}) -\left[\cos(j_{\ell-1}/2) - \sqrt{a}\cos(U(j_{\ell-1}/2))\right]\sin(\alpha_{\ell-2} - j_{\ell-1}/2).
\end{array}
\right.
\end{equation}

\subsection{Blaschke deformations}
We now introduce a family of deformations in the class of Reuleaux polygons of width 1, which allow to connect any pair of elements in a continuous way (with respect to the complementary Hausdorff distance), staying in the class. This definition has been introduced by W. Blaschke in \cite{Bla} and analysed by Kupitz-Martini in \cite{KM}.

\begin{definition}\label{def-BD}
Let $\Omega$ be a Reuleaux polygon with $2N+1$ sides. Let $k$ be one of the indexes in $\{1,\ldots, 2N+1\}$. A \emph{Blaschke deformation} acts moving the point $P_{k}$ on the arc $\gamma_{k-1}$ increasing or decreasing the arc length. Consequently, the point $\gamma_{k+1}$ moves and the arcs $\gamma_{k}$, $\gamma_{k+1}$, and $\gamma_{k+2}$ are deformed, as in Fig. \ref{fig-def}. We say that a Blaschke deformation is \emph{small} if the arc length of $\gamma_{k-1}$ has changed of $\e\in \mathbb R$, small in modulus. 
\end{definition}

\begin{figure}[h]                                             
\begin{center}                                                
{\includegraphics[height=4.5truecm] {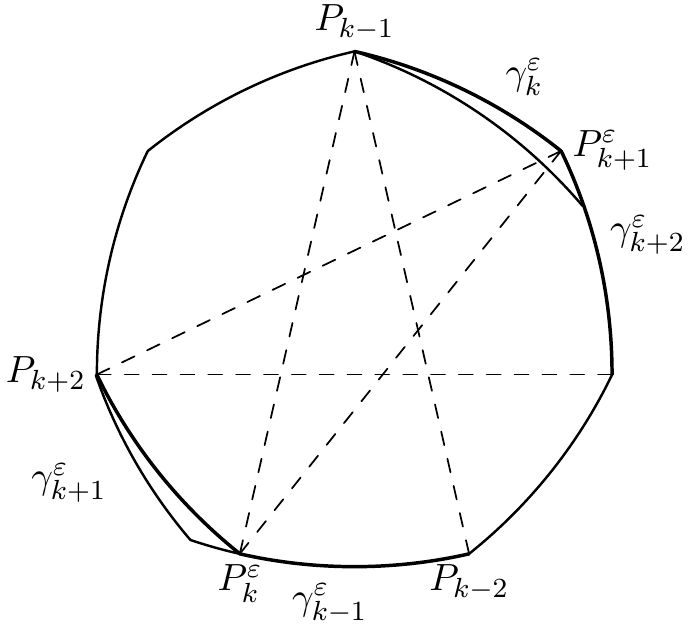}}                                                
\end{center}                                                  
\caption{{\it A Blaschke deformation of a Reuleaux heptagon which moves $P_k$ on $\gamma_{k-1}$ changing $\alpha_{k-1}$ into $\alpha_{k-1}^\e:=\alpha_{k-1}+\e$, with $\e>0$ small.}}\label{fig-def} 
\end{figure}

Let $\Omega_\e$ denote the Reuleaux polygon obtained by $\Omega$ after a small Blaschke deformation of parameter $\e$, moving $P_k$. When $\e$ is infinitesimal, $\Omega_\e$ can be written as the image of a small perturbation of the identity: 
$$
\Omega_\e = \phi_\e (\Omega),\quad \phi_\e (x)= x + \e V(x) + o(|\e|),
$$
for a suitable vector field $V$. The behavior of $V$ on the boundary is described in  \cite[\S 2.1, formulas (2.5) and (2.6)]{HL}, in particular, adopting the parametrization \eqref{param} of the boundary arcs, there holds:
\begin{equation}\label{Vn}
V\cdot n = \left\{
\begin{array}{lll}
\sin(s-\alpha_{k-1}) \quad & \hbox{on }\gamma_k
\\
\displaystyle{-\frac{\sin j_k}{\sin j_{k+1}}\sin(s-\alpha_{k+1})}\quad & \hbox{on }\gamma_{k+1}
\\
0 & \hbox{else},
\end{array}
\right.
\end{equation}
where, according to \eqref{defj}, $j_k$ and $j_{k+1}$ are the lengths of $\gamma_k$ and $\gamma_{k+1}$, respectively.

\subsection{The first order shape derivative of $h$ with respect to Blaschke deformations}
In order to derive optimality conditions, a classical idea is to impose that the first order shape derivative of $h$ at a critical Reuleaux polygon vanishes for every small deformation which preserves the constraints of $\mathcal B^1_N$.

For a generic convex set $\Omega$, denoting by $C_\Omega$ its (unique, see \cite{AlCa}) Cheeger set, the first order shape derivative of $h$ at $\Omega$ in direction $V\in C^1(\mathbb R^2;\mathbb R^2)$ reads (see \cite{PaSa})
\begin{equation}\label{hprime}
\frac{\de h}{\de V}(\Omega) :=\lim_{\varepsilon \to 0}\frac{h((I + \varepsilon V)(\Omega) )- h(\Omega)}{\varepsilon}=\frac{1}{|C_\Omega|} \int_{\partial C_\Omega \cap \partial \Omega} (\mathcal C_\Omega - h(\Omega)) V\cdot n\, \de \mathcal H^{1},
\end{equation}
where $I$ is the identity map and  $\mathcal C_\Omega$ is the curvature.

A consequence of this formula is that the Cheeger set of a maximizer in $\mathcal B_N^1$ intersects all the boundary arcs.
\begin{proposition}\label{boundary}
Let $\Omega^*$ be a maximizer for $h$ in $\mathcal B_N^1$. Then $\partial C_\Omega \cap \gamma_\ell \neq \emptyset$ for every $\ell$.
\end{proposition}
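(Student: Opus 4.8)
The plan is to argue by contradiction, combining the shape derivative \eqref{hprime} with the explicit form of the Blaschke deformations. The first thing I would record is a sign: on $\partial C_{\Omega^*}\cap\partial\Omega^*$, which is a union of subarcs of the radius‑$1$ circles, the curvature is the constant $\mathcal C_{\Omega^*}=1$, whereas $h(\Omega^*)\geq h(\mathbb T)\geq 4.3853>1$ since $\mathbb T\in\mathcal B^1_N$. Hence the weight $\mathcal C_{\Omega^*}-h(\Omega^*)=1-h(\Omega^*)$ in \eqref{hprime} is a \emph{strictly negative constant}, and the sign of $\frac{\de h}{\de V}(\Omega^*)$ is exactly opposite to that of $\int_{\partial C_{\Omega^*}\cap\partial\Omega^*}V\cdot n\,\de\mathcal H^1$.

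Next I would plug the boundary values \eqref{Vn} of the deformation moving $P_k$ into the integral. Since $V\cdot n$ is supported on $\gamma_k\cup\gamma_{k+1}$, only the touching subarcs $\gamma_k'$ and $\gamma_{k+1}'$ contribute, and the derivative reads
\[
\frac{\de h}{\de V}(\Omega^*)=\frac{1-h(\Omega^*)}{|C_{\Omega^*}|}\left[\int_{\gamma_k'}\sin(s-\alpha_{k-1})\,\de s-\frac{\sin j_k}{\sin j_{k+1}}\int_{\gamma_{k+1}'}\sin(s-\alpha_{k+1})\,\de s\right].
\]
Using $\beta_k=\alpha_{k-1}+\pi$ and $0<j_k,j_{k+1}<\pi$, a direct check gives $\sin(s-\alpha_{k-1})\geq 0$ on $\gamma_k$ and $\sin(s-\alpha_{k+1})\geq 0$ on $\gamma_{k+1}$, each vanishing only at a single endpoint. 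Consequently both integrals are nonnegative, and each is \emph{strictly} positive precisely when the corresponding touching arc ($\gamma_k'$, resp. $\gamma_{k+1}'$) has positive length.

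Then I would extract the contradiction. By \cite{KLR} the set $\partial C_{\Omega^*}\cap\partial\Omega^*$ is non‑empty, so $S:=\{\ell:\gamma_\ell'\neq\emptyset\}\neq\emptyset$; if the conclusion failed, $S$ would be a proper non‑empty subset of $\mathbb Z/(2N+1)\mathbb Z$. As $1$ generates this cyclic group, there is an index $k$ for which exactly one of $\gamma_k',\gamma_{k+1}'$ is non‑empty. In that case one of the two integrals above vanishes while the other is strictly positive, so the bracket has a definite, nonzero sign; multiplying by $1-h(\Omega^*)<0$, the deformation moving $P_k$ with the appropriately chosen sign of $\e$ produces $\frac{\de h}{\de V}(\Omega^*)>0$, contradicting maximality. (Equivalently, since small Blaschke deformations are admissible in both directions, optimality forces the bracket to vanish for every $k$, whence $\gamma_k'\neq\emptyset\Leftrightarrow\gamma_{k+1}'\neq\emptyset$ for all $k$, so all arcs are touched as soon as one is.) The degenerate possibility that $\Omega^*$ is the Reuleaux triangle is disposed of directly, by symmetry all three arcs are then touched.

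The routine sign bookkeeping is not the delicate part; the main obstacle is justifying that \eqref{hprime} may legitimately be applied along Blaschke deformations, which are not presented as a global $C^1$ vector field. I would lean on the boundary description of $V$ imported from \cite[\S 2.1]{HL} to supply $V\cdot n$, and separately verify the admissibility within $\mathcal B^1_N$ of the perturbed polygon for small $\e$ of the required sign, together with the (geometrically clear but needed) fact that each touching set $\gamma_\ell'$ is either empty or a genuine arc of positive length rather than an isolated contact point.
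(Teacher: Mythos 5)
Your overall route coincides with the paper's: plug the Blaschke deformation field \eqref{Vn} into the shape derivative \eqref{hprime}, use that the curvature equals $1$ on the contact set so that the weight $1-h(\Omega^*)$ is a nonzero constant, observe that the two integrands $\sin(s-\alpha_{k-1})$ and $\sin(s-\alpha_{k+1})$ are nonnegative on $\gamma_k$, $\gamma_{k+1}$ and vanish only at one endpoint, and then exploit the cyclic structure of the indices. That part of the bookkeeping is correct, and your explicit treatment of the Reuleaux triangle (where Blaschke deformations are unavailable) is a reasonable addition.

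The genuine gap is exactly the point you set aside as ``geometrically clear but needed'': that each contact set $\gamma_\ell'$ is either empty or an arc of positive length, never a single tangential contact point. This is not clear, and it is the delicate half of the paper's proof. A singleton $\gamma_\ell'$ corresponds to the inner parallel body $\Omega_{-R}$ meeting the circle $\partial B(P_\ell,1-R)$ at exactly one point (a degenerate arc of $\partial\Omega_{-R}$); the free boundary of $C_\Omega$ then consists of radius-$R$ arcs tangent to $\gamma_\ell$ at that single point, a configuration fully compatible with the Kawohl--Lachand-Robert structure theorem, which you invoke only to get $\partial C_{\Omega^*}\cap\partial\Omega^*\neq\emptyset$. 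Without excluding singletons, your transition-index step fails: at an index where $\gamma_k'=\emptyset$ and $\gamma_{k+1}'$ is a singleton, \emph{both} integrals vanish and no contradiction arises; propagating the vanishing brackets around the polygon then yields only that every $\gamma_\ell'$ is empty or a singleton. Ruling out this residual configuration is what the second half of the paper's proof does: if all contacts were empty or singletons, the curvature of $\partial C_\Omega$ would equal $1/R(\Omega)$ almost everywhere, and the identity
\begin{equation*}
\mathcal{H}^1(\partial C_\Omega)=\int_{\partial C_\Omega} \mathcal{C}_{C_\Omega}\, X\cdot n \,\de\mathcal H^1=\frac{1}{R(\Omega)}\int_{\partial C_\Omega} X\cdot n\,\de\mathcal H^1=\frac{2|C_\Omega|}{R(\Omega)}
\end{equation*}
would force $h(\Omega)=2/R(\Omega)$, contradicting $h(\Omega)=1/R(\Omega)$ from \eqref{defR}. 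You need this argument (or some other proof that the contact set has positive length) to close your proof; as written, the proposal is incomplete precisely at its crucial step.
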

\begin{proof}
Let us consider an infinitesimal Blaschke deformation which moves the vertex $P_k$ (see Definition \ref{def-BD}). Since $\Omega$ is a critical shape, the first order shape derivative of $h$ at $\Omega$ with respect to this deformation is zero. In formulas, exploiting \eqref{hprime}, \eqref{Vn}, and the fact that the curvature $\mathcal C_\Omega$ is equal to 1 on $\partial \Omega \cap \partial C_\Omega$, we derive the following optimality condition:  
\begin{equation}\label{op1}
0 =  \int_{\partial C_\Omega \cap \gamma_k } \sin(s-\alpha_{k-1})\,\de s -\frac{\sin j_k}{\sin j_{k+1}}
\int_{\partial C_\Omega \cap \gamma_{k+1}} \sin (s-\alpha_{k+1})\, \de s.
\end{equation}
Looking at \eqref{op1}, we see that if the Cheeger set does not meet the arc $\gamma_k$,
the corresponding integral over $\partial C_\Omega \cap \gamma_k$ is zero, therefore the
other integral has to be zero, meaning that either the Cheeger set does not meet $\gamma_{k+1}$ or the intersection is a single point.
Repeating this argument, we obtain that $\partial C_\Omega \cap \gamma_\ell$ is either empty
or a singleton, for any arc $\gamma_\ell$ on the boundary.

Let us prove that it is impossible (this is a common fact for any Cheeger set).
Since the free parts of $C_\Omega$ are arcs of circle of radius $R(\Omega)$, if the Cheeger
set touches the boundary of $\Omega$ only at singletons, we would have that the curvature $\mathcal{C}_{C_\Omega}$ of
the Cheeger set is equal to $1/R(\Omega)$ almost everywhere. Thus by Gauss-Bonnet formula:
$$\mathcal{H}^1(\partial C_\Omega)=\int_{\partial C_\Omega} \mathcal{C}_{C_\Omega} X\cdot n=\frac{1}{R(\Omega)} \int_{\partial C_\Omega} X\cdot n = \frac{2|C_\Omega|}{R(\Omega)},$$
which would yield 
$$h(\Omega)= \frac{\mathcal{H}^1(\partial C_\Omega)}{|C_\Omega|}=\frac{2}{R(\Omega)},$$
in contradiction with \eqref{defR}.
\end{proof}

Taking $\Omega$ a Reuleaux polygon and $V$ inducing an arbitrary Blaschke deformation, we obtain a family of optimality conditions. In order to state the result, let us introduce the following auxiliary functions:
\begin{eqnarray}
&& G(x):=\sin^2 (x) + \sqrt{a} \cos(x) \cos(U(x))
=\sin^2 (x) + \cos(x) \sqrt{a-\sin^2(x)},\label{defg}
\\
&& F(x,y):=\sqrt{a}\cos(2x+y-U(y)), \label{defF}
\\
&& H(x,y,z):=\sin(2z)[g(x)-F(y,z)],\label{defH}
\end{eqnarray}
where $a$ is a constant depending on $\Omega$, $a:=(1-R(\Omega))^2$, and $U$ is the function defined in \eqref{defU}. 
\begin{proposition}\label{propoc} Let $\Omega$ be a critical shape for $h$ in the class of Reuleaux polygons. Then, for every $k$,
\begin{equation}\label{oc3}
H\left( \frac{j_{k-1}}{2}, \frac{j_{k}}{2}, \frac{j_{k+1}}{2}   \right)=
H\left( \frac{j_{k+2}}{2}, \frac{j_{k+1}}{2}, \frac{j_{k}}{2}   \right).
\end{equation}
\end{proposition}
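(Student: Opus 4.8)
The plan is to start from the single optimality condition \eqref{op1} already obtained in the proof of Proposition \ref{boundary}. Since $\Omega$ is critical, that identity holds for every $k$, and since Proposition \ref{boundary} guarantees that the Cheeger set meets every arc, both integrals in \eqref{op1} run over genuine subarcs $\gamma_k' = \partial C_\Omega \cap \gamma_k$, parametrized by $s\in[\alpha_k',\beta_k']$ as in \eqref{arcprime}. The first step is purely computational: each integrand is of the form $\sin(s-c)$, so $\int_{\alpha_\ell'}^{\beta_\ell'}\sin(s-c)\,\de s = \cos(\alpha_\ell'-c)-\cos(\beta_\ell'-c)$, and everything reduces to four cosines of differences between the contact angles and the reference directions $\alpha_{k-1}$ (for the integral over $\gamma_k$) and $\alpha_{k+1}$ (for the integral over $\gamma_{k+1}$).

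The second step is to eliminate the contact angles in favour of the arc lengths $j_\ell$ using the projection formulas \eqref{proj1}--\eqref{proj2}. I would expand $\cos(\alpha_\ell'-c)=\cos\alpha_\ell'\cos c+\sin\alpha_\ell'\sin c$, multiply by $(1-R)$, and insert the right-hand sides of \eqref{proj1}--\eqref{proj2}. The outcome collapses, via the elementary relations $\alpha_k-\alpha_{k-1}=\pi-j_k$ and $\beta_k-\alpha_{k-1}=\pi$ (which follow from $\beta_k=\alpha_k+j_k$ and $\beta_{k+1}=\alpha_k+\pi$), into expressions in the $j_\ell$ and in the bracketed quantity $C_m:=\cos(j_m/2)-\sqrt{a}\cos(U(j_m/2))=\cos(j_m/2)-\sqrt{a-\sin^2(j_m/2)}$ appearing in \eqref{proj1}--\eqref{proj2}. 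Here one must keep careful track of which arc length governs each contact angle: the vertex $P_{m+1}$ controls $\alpha_m'$ (bringing in $j_{m+1}$) while $P_{m-1}$ controls $\beta_m'$ (bringing in $j_{m-1}$).

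The heart of the matter is recognizing two algebraic simplifications that turn this raw trigonometric output into the functions $G$ and $F$. The first is $C_m\cos(j_m/2)=1-G(j_m/2)$, immediate from \eqref{defg} since the two terms complete $\cos^2+\sin^2=1$. The second is the less obvious collapse $C_{k+1}\cos(j_k+j_{k+1}/2)=\cos(j_k)-F(j_k/2,j_{k+1}/2)$, which I would obtain by expanding $F$ through the addition formula into $\sqrt{a-\sin^2(y)}\cos(2x+y)+\sin(y)\sin(2x+y)$ (using $\sqrt{a}\sin U(y)=\sin y$), so that the remaining cross terms reassemble into a single cosine of a difference. Together with the symmetric identity for $C_k\cos(j_{k+1}+j_k/2)$, these reduce the first integral (times $1-R$) to exactly $G(j_{k-1}/2)-F(j_k/2,j_{k+1}/2)$ and the second to $G(j_{k+2}/2)-F(j_{k+1}/2,j_k/2)$.

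The last step is bookkeeping: multiplying \eqref{op1} by $(1-R)\sin j_{k+1}$ and inserting the two simplified expressions yields
\[
\sin(j_{k+1})[G(j_{k-1}/2)-F(j_k/2,j_{k+1}/2)]=\sin(j_k)[G(j_{k+2}/2)-F(j_{k+1}/2,j_k/2)],
\]
which is precisely \eqref{oc3} once one recalls $\sin j_\ell=\sin(2\cdot j_\ell/2)$ and the definition \eqref{defH} of $H$. The main obstacle is not conceptual but the disciplined trigonometric manipulation of the second and third steps: managing the index dependence of the contact angles, and—above all—grouping the scattered square-root terms $\sqrt{a-\sin^2(\cdot)}$ so that they reconstitute $G$ and $F$ rather than surviving as an unrecognizable mess. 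Everything else is forced once those two collapses are spotted.
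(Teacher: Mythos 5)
Your proposal is correct and follows essentially the same route as the paper's own proof: starting from the optimality condition \eqref{op1}, evaluating the integrals as cosines of angle differences, eliminating the contact angles $\alpha_\ell',\beta_\ell'$ via the projection formulas \eqref{proj1}--\eqref{proj2}, and recognizing $G$ and $F$ through exactly the two trigonometric collapses you describe (the paper carries them out as \eqref{term1} and \eqref{term2}, using $\sqrt{a}\sin U(x)=\sin x$ and the product-to-difference identity) before multiplying through by $\sin j_{k+1}$ and $\sqrt{a}$ to reach \eqref{oc3}. Your index bookkeeping ($P_{\ell+1}$ governing $\alpha_\ell'$ via $j_{\ell+1}$, $P_{\ell-1}$ governing $\beta_\ell'$ via $j_{\ell-1}$) matches the paper's choices $\ell=k$, $\ell=k+2$ in \eqref{proj1}--\eqref{proj2}, so nothing essential differs.
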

\begin{proof} In the following, for brevity, the functional $R(\Omega)$ will be denoted by $R$.

We use again Formula  \eqref{op1}.
In view of the parametrization \eqref{arcprime} of $\partial C_\Omega\cap \partial \Omega$, we get
\begin{align*}
0 & = \int_{\alpha_k' }^{\beta_k'} \sin(s-\alpha_{k-1})\,\de s -\frac{\sin j_k}{\sin j_{k+1}}
\int_{\alpha_{k+1}'}^{\beta_{k+1}'} \sin (s-\alpha_{k+1})\, \de s
\\
& = -\cos(\beta_k'-\alpha_{k-1}) + \cos(\alpha_k'-\alpha_{k-1}) + \frac{\sin j_k}{\sin j_{k+1}} \left[\cos(\beta_{k+1}'-\alpha_{k+1}) - \cos (\alpha_{k+1}' -\alpha_{k+1}) \right]
\\
& =  \cos(\beta_k'-\beta_k) - \cos(\alpha_k'-\beta_k) + \frac{\sin j_k}{\sin j_{k+1}} \left[\cos(\beta_{k+1}'-\alpha_{k+1}) - \cos (\alpha_{k+1}' -\alpha_{k+1}) \right],
\end{align*}
where in the last equality we have used $\alpha_{k-1}\equiv\beta_k + \pi$, modulo $2\pi$. 
Rearranging the terms, we rewrite the optimality condition as:
\begin{equation}\label{oc1}
\sin (j_{k+1}) [\cos (\beta_k-\beta'_k) - \cos(\beta_k-\alpha'_k)] = \sin(j_k) [\cos (\alpha'_{k+1}-\alpha_{k+1}) - \cos(\beta'_{k+1}-\alpha_{k+1})].
\end{equation}
Exploiting \eqref{proj1} with $\ell=k$, we obtain
\begin{align}
\cos(\beta_k-\beta_k') &= \cos(\beta_k)\cos(\beta_k')+\sin(\beta_k)\sin(\beta_k')\notag
\\ & = \left[\sin^2(j_{k-1}/2) +\sqrt{a} \cos(j_{k-1}/2) \cos(U(j_{k-1}/2))\right]/\sqrt{a}\notag
\\
& =G\left(\frac{j_{k-1}}{2}\right)/\sqrt{a},\label{term1}
\end{align}
where $G$ is the function defined in \eqref{defg} and $a:=(1-R)^2$.
Similarly, taking $\ell=k+2$ in \eqref{proj2}, we get
\begin{align}
\cos(\beta_k-\alpha_k') &=  \left[ \cos(j_k) - \left( \cos(j_{k+1}/2)-\sqrt{a} \cos(U(j_{k+1}/2))\right)\cos(j_k+j_{k+1}/2) \right]/\sqrt{a}\notag
\\
&= \left[\sin(j_k)\sin (j_k + j_{k+1}/2) + \sqrt{a} \cos(U(j_{k+1}/2))\cos(j_k+j_{k+1}/2) \right] /\sqrt{a}\notag
\\
& =F\left(\frac{j_k}{2}, \frac{j_{k+1}}{2}\right)/\sqrt{a},\label{term2}
\end{align}
where $F$ is the function introduced in \eqref{defF}.
Here, for the last equality, we have used $\cos(x) \cos(y)=\cos(x-y)-\sin(x)\sin(y)$, together with the fact $\sqrt{a}\sin (U(x))=\sin(x)$.
By combining \eqref{term1} with \eqref{term2}, we may rewrite the left-hand side of \eqref{oc1} as
$$
H\left( \frac{j_{k-1}}{2}, \frac{j_{k}}{2}, \frac{j_{k+1}}{2}   \right)/\sqrt{a}.
$$ 
The same strategy adopted for the left-hand side of \eqref{oc1} also applies for the right-hand side, giving $G\left(\frac{j_{k+2}}{2}\right)/\sqrt{a}-F\left(\frac{j_{k+1}}{2}, \frac{j_{k}}{2}\right)/\sqrt{a} = H\left( \frac{j_{k+2}}{2}, \frac{j_{k+1}}{2}, \frac{j_{k}}{2}   \right)/\sqrt{a}$.
By multiplying both sides by $\sqrt{a}$, we get \eqref{oc3}.
\end{proof}
\begin{remark}
The optimality condition \eqref{oc3} is obviously satisfied by all the regular Reuleaux
polygons. Actually, we believe that only the regular Reuleaux polygons are critical points
for the Cheeger constant, and we give some support to this claim in Remark \ref{rem3.6}.
If we were able to prove that fact, the proof of our main theorem would be much simpler
as we could make the explicit computation of the Cheeger constant of any regular
Reuleaux polygon. Let us also refer to the recent paper \cite{Phi} where is done
a similar comparison between the first Dirichlet eigenvalue or the torsion among
every regular Reuleaux polygon, showing that the Reuleaux triangle is always the
optimal domain in this restricted class.
\end{remark}
\subsection{Analysis of the optimality conditions}
Throughout the subsection $\Omega$ will denote a maximizer for $h$ in $\mathcal B^1_M$ for some $M$, with $2N+1$ boundary arcs $j_1, \ldots, j_{2N+1}$.
We will use the optimality conditions stated in Propositions \ref{boundary} and \ref{propoc} to obtain some information on the lengths of consecutive intervals. 

\medskip

A first rough estimate can be deduced from Propositions \ref{boundary}:
\begin{lemma}\label{roughesti}
Two consecutive lengths $j_k$ and $j_{k+1}$ satisfy
$$
0.1339 j_{k+1} \leq j_k \leq \frac{1}{0.1339} j_{k+1}.
$$
\end{lemma}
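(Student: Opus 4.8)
The plan is to prove the estimate using only the non-degeneracy of the contact set provided by Proposition~\ref{boundary}, and \emph{not} the full optimality condition \eqref{oc3}. Adopting the notation of \eqref{arcprime}, for each arc I set the two end-gaps
$$u_\ell:=\alpha_\ell'-\alpha_\ell\ge 0,\qquad v_\ell:=\beta_\ell-\beta_\ell'\ge 0,$$
so that the contact subarc $\gamma_\ell'$ has length $j_\ell-u_\ell-v_\ell$. By Proposition~\ref{boundary} this length is strictly positive, whence in particular $u_\ell<j_\ell$ and $v_\ell<j_\ell$ for every $\ell$.

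First I would read off the two gaps from the tangency computations already carried out in the proof of Proposition~\ref{propoc}. Comparing the parametrization \eqref{arcprime} with \eqref{term1}--\eqref{term2} gives, with $a:=(1-R)^2$,
$$\cos u_\ell=\frac{G(j_{\ell+1}/2)}{\sqrt a},\qquad \cos v_\ell=\frac{G(j_{\ell-1}/2)}{\sqrt a},$$
where $G$ is the function \eqref{defg}. Introducing the auxiliary function $\Phi(t):=\arccos\!\big(G(t/2)/\sqrt a\big)$ (well defined on the admissible range, since the right-hand sides above lie in $[-1,1]$ precisely when the configuration is geometrically meaningful), this reads $u_\ell=\Phi(j_{\ell+1})$ and $v_\ell=\Phi(j_{\ell-1})$. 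Specializing $u_\ell<j_\ell$ to $\ell=k$ and $v_\ell<j_\ell$ to $\ell=k+1$ yields
$$\Phi(j_{k+1})=u_k<j_k,\qquad \Phi(j_k)=v_{k+1}<j_{k+1}.$$
Hence the whole statement reduces to the one-variable inequality $\Phi(t)\ge 0.1339\,t$ for all admissible $t$: indeed this turns the two displayed strict inequalities into $j_k>0.1339\,j_{k+1}$ and $j_{k+1}>0.1339\,j_k$, which are exactly the two bounds claimed.

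To prove $\Phi(t)\ge 0.1339\,t$ I would first identify the extremal value. A Taylor expansion of \eqref{defg} at $0$ gives $G(t/2)/\sqrt a=1-\tfrac{(1-\sqrt a)^2}{8a}\,t^2+o(t^2)$, and since $(1-\sqrt a)^2=R^2$ one finds the clean limit
$$\lim_{t\to 0^+}\frac{\Phi(t)}{t}=\frac{R}{2(1-R)}.$$
The map $R\mapsto R/\big(2(1-R)\big)$ is increasing, so by the lower bound $R\ge r(\mathbb T)/2$ from \eqref{estimatesR} we get $\displaystyle \lim_{t\to0^+}\Phi(t)/t\ge \frac{r(\mathbb T)/2}{2\,(1-r(\mathbb T)/2)}=0.13397\ldots>0.1339$. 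Thus the desired inequality follows once we know that the ratio $\Phi(t)/t$ attains its minimum (over the admissible $t$) in the limit $t\to0^+$, equivalently that $\Phi(t)/t$ is non-decreasing in $t$.

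The main obstacle is precisely this last monotonicity: showing that $\Phi(t)/t$ has no interior minimum below $0.1339$ on the admissible interval $t\in\big(0,2\arcsin(1-R)\big]$ (which comfortably contains all occurring arc lengths, each $j_\ell<\pi/3$). This is elementary but delicate, because $\Phi$ is an $\arccos$ of the composite expression $G(t/2)/\sqrt a$; the cleanest route is probably to prove that $\Phi$ is convex with $\Phi(0)=0$, so that $\Phi(t)\ge \Phi'(0^+)\,t=\tfrac{R}{2(1-R)}\,t$ directly, rather than to track the sign of $\big(\Phi(t)/t\big)'$. It is worth stressing that although $u_\ell$ and $v_\ell$ couple three consecutive arcs, the inequality $\Phi(t)\ge 0.1339\,t$ is applied in one variable at a time, so no simultaneous control of several lengths is ever required; this is what makes the estimate ``rough'' yet cheap.
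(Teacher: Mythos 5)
Your reduction is structurally the same as the paper's own proof. The end-gaps $u_\ell,v_\ell$ you introduce are exactly the quantities the paper works with, and your $\Phi(t):=\arccos\bigl(G(t/2)/\sqrt{a}\bigr)$ is nothing but $U(t/2)-t/2$ with $U$ from \eqref{defU}, since $\cos\bigl(U(x)-x\bigr)=G(x)/\sqrt{a}$ (this is the identity behind \eqref{term1}; note, as a minor point, that your formula for $u_\ell$ comes from \eqref{proj2} and the symmetry of the vertex configuration, not from \eqref{term2}, which computes the different quantity $\cos(\beta_k-\alpha_k')$). The use of Proposition \ref{boundary} to bound each gap by the length of its arc, and of $R\geq r(\mathbb T)/2$ from \eqref{estimatesR} to produce the constant $0.1339$, are also precisely the paper's steps, and your limit computation $\Phi(t)/t\to R/\bigl(2(1-R)\bigr)\geq 0.13397$ as $t\to 0^+$ is correct.

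The genuine gap is the step you explicitly defer: you never prove that $\Phi(t)/t$ attains its minimum in the limit $t\to 0^+$ (equivalently, that $\Phi$ is convex); you only say this "is probably" the cleanest route. Without that step the limit computation proves nothing, since a priori $\Phi(t)/t$ could dip below $0.1339$ at some admissible $t$. The good news is that the missing step is true and closes in two lines, in either of two ways. Convexity, as you suggest: writing $a=(1-R)^2$, one has $U''(x)=(1-a)\sin x\,\bigl(a-\sin^2 x\bigr)^{-3/2}\geq 0$ on the admissible range, so $\Phi(t)=U(t/2)-t/2$ is convex with $\Phi(0)=0$, whence $\Phi(t)\geq \Phi'(0^+)\,t=\tfrac{R}{2(1-R)}\,t$. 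Or, more directly (this is in effect what the paper does with its "standard estimates for the arcsine and sine", cf. also \eqref{estiu}): $U'(x)=\cos x/\sqrt{a-\sin^2 x}\geq 1/\sqrt{a}$, because after squaring this is equivalent to $a\sin^2 x\leq \sin^2 x$; integrating from $0$ gives $U(x)\geq x/(1-R)$, i.e. $\Phi(t)\geq \tfrac{R}{2(1-R)}\,t$ pointwise, with no monotonicity-of-ratio argument needed. With either of these inserted, your proof is complete and coincides with the paper's.
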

\begin{proof}
In subsection \ref{ssR}, we have highlighted the relation between the lengths of the arcs $\gamma_\ell'\subset \partial \Omega \cap \partial C_\Omega$ and the arcs $\gamma_\ell$. This relation, valid for every $\ell$ thanks to Proposition \ref{boundary}, can be stated as follows (see also Fig. \ref{ballR}  with $\ell=k+2$):
$$\alpha'_k-\alpha_k = \arcsin\left(\frac{\sin j_{k+1}/2}{1-R}\right)-\frac{j_{k+1}}{2}=U\left(\frac{j_{k+1}}{2}\right) - \frac{j_{k+1}}{2}.$$
Now $j_k\geq \alpha'_k-\alpha_k$, thus using standard estimates for the arcsine and sine, we obtain
$$
j_k \geq \arcsin\left(\frac{1}{1-R}\sin\left(\frac{j_{k+1}}{2}\right)\right) - \frac{j_{k+1}}{2}  \geq  \frac{R}{2(1-R)} j_{k+1} \geq 0.1339 j_{k+1}.
$$
For the last inequality, we have used $R\geq 0.21132$, see \eqref{estimatesR}.
The other bound in the statement can be obtained in a similar way, by considering the difference of $\beta_{k+1}-\beta'_{k+1}$.
\end{proof}

The optimality condition \eqref{oc3} in Proposition \ref{propoc} allows us to obtain a refined estimate.
\begin{theorem}\label{theolengths}
Two consecutive lengths $j_k$ and $j_{k+1}$ satisfy
\begin{equation}
\tau j_{k+1} \leq j_k \leq \frac{1}{\tau} j_{k+1}\ \mbox{with $\tau \geq 0.99 - 0.05 h^2$}
\end{equation}
where $h$ denotes the largest length (among all arcs of the Reuleaux polygon).
\end{theorem}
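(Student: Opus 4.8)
The plan is to extract from the optimality condition \eqref{oc3} a single scalar inequality for the \emph{extremal} ratio of consecutive arc lengths, exploiting that the largest ratio automatically dominates the neighbouring ratios. First I would rewrite $G$ and $F$ using the exact identities $\sqrt a\,\sin U(x)=\sin x$ and $\sqrt a\,\cos U(x)=\sqrt{a-\sin^2 x}$, and Taylor-expand in the (small) half-lengths $x=j_{k-1}/2$, $y=j_k/2$, $z=j_{k+1}/2$, $w=j_{k+2}/2$. A short computation gives $G(x)=\sqrt a + c_1 x^2 + O(x^4)$ with
$$c_1 = 1 - \frac{1}{2\sqrt a} - \frac{\sqrt a}{2} = -\frac{R^2}{2(1-R)},$$
together with an analogous quadratic expansion of $F$. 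The decisive structural fact is the identity $R+\sqrt a=1$ (since $\sqrt a=1-R$), which makes the leading contributions linear in $y-z$ cancel in \eqref{oc3}; after multiplying out and reinstating lengths, the optimality condition collapses to the reduced relation
$$(j_k - j_{k+1})\Big[\tfrac12 j_k j_{k+1} + \tfrac{c_1}{4}\big(j_k^2 + j_k j_{k+1} + j_{k+1}^2\big)\Big] = \tfrac{c_1}{4}\big(j_{k+2}^2\, j_k - j_{k-1}^2\, j_{k+1}\big) + (\text{remainder}),$$
where the remainder is of higher order in the lengths and the coefficient $c_1$ is small, of order $R^2$.

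Next I would run the extremal-index argument. Set $\rho := \max_k j_k/j_{k+1}$; since the consecutive ratios multiply to $1$ around the cycle, $\rho\ge 1$. I evaluate the reduced relation at an index $k_0$ realizing this maximum. The left bracket is positive (it equals $\tfrac12 j_{k_0}j_{k_0+1}$ up to the small $c_1$-term, which cannot change the sign thanks to Lemma~\ref{roughesti} keeping consecutive lengths comparable), and $j_{k_0}-j_{k_0+1}\ge 0$, so the left-hand side is nonnegative; since $c_1<0$ this forces $j_{k_0+2}^2\,j_{k_0}\le j_{k_0-1}^2\,j_{k_0+1}$. Dividing through by $\tfrac12 j_{k_0}j_{k_0+1}^2$ yields
$$\rho-1 \le \frac{|c_1|}{2}\,\frac{j_{k_0-1}^2}{j_{k_0}j_{k_0+1}} + (\text{remainder}).$$
The essential point is that \emph{every} consecutive ratio is $\le\rho$, hence $j_{k_0-1}/j_{k_0}\le\rho$ and $j_{k_0-1}/j_{k_0+1}\le\rho^2$, so that $j_{k_0-1}^2/(j_{k_0}j_{k_0+1})\le\rho^3$. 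This closes the estimate into the self-contained scalar inequality $\rho-1 \le \tfrac{|c_1|}{2}\,\rho^3 + (\text{remainder})$, which pins $\rho$ near $1$.

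To obtain the explicit constants I would bound $|c_1|=R^2/\big(2(1-R)\big)$ using $R\le R(\mathbb T)\le 0.22803$ from \eqref{estimatesR}; solving $\rho-1\le\tfrac{|c_1|}{2}\rho^3$ for $\rho$ close to $1$ gives $\rho\le 1/\tau$ with $\tau$ just below $1$, and the higher-order remainders, being of relative size $h^2$ (all half-lengths being $\le h/2$), degrade the bound to $\tau\ge 0.99-0.05\,h^2$. Applying the same reasoning to $\max_k j_{k+1}/j_k$ yields the matching lower bound, so that $\tau\, j_{k+1}\le j_k\le \tau^{-1} j_{k+1}$ for every $k$.

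I expect two difficulties. The conceptual one is the coupling of four consecutive arcs in \eqref{oc3}: a direct ratio bound is hopeless because Lemma~\ref{roughesti} controls neighbour ratios only through the weak factor $0.1339$, and the crude $\rho^3$-type bounds would blow up. The extremal-index device is precisely what resolves this, since at the maximal ratio all neighbour ratios are automatically $\le\rho$. The more technical obstacle, and the real grind, is the rigorous remainder analysis: the reduced relation above is only the cubic truncation, so producing the honest constants $0.99$ and $0.05$ requires uniform estimates on all higher-order terms of $G$, $F$, and $H$ over the compact ranges $R\in[0.21132,0.22803]$ and half-lengths in $[0,h/2]$ — and this is exactly where the $h^2$ correction is generated and must be tracked carefully.
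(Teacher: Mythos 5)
Your preparatory work is sound and coincides with the paper's: the expansions of $G$, $F$, $H$ are correct (indeed $c_1=-R^2/(2(1-R))$), and your ``reduced relation'' is, after multiplying by $2$, exactly the leading part of the paper's inequalities \eqref{estiH3}--\eqref{estiH4}. The gap is in the extremal-index step, and it is twofold. First, writing $\kappa:=R^2/(4(1-R))=|c_1|/2\leq 0.01684$, your scalar inequality $\rho-1\leq\kappa\rho^3+(\mathrm{remainder})$ does \emph{not} pin $\rho$ near $1$: the cubic inequality $\rho-1\leq 0.01684\,\rho^3$ is satisfied not only for $\rho\leq 1.018$ but also for every $\rho\gtrsim 7.15$ (at $\rho=7.468$ the right-hand side is about $7.01$ while the left-hand side is $6.468$), and the only a priori control available, Lemma \ref{roughesti}, gives merely $\rho\leq 1/0.1339\approx 7.47$. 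So the branch $\rho\in[7.15,\,7.47]$ is not excluded; worse, after your normalization the remainder contains terms like $j_{k_0-1}^4 j_{k_0+1}/(j_{k_0}j_{k_0+1}^2)\leq \rho^7 j_{k_0+1}^2$, which blow up on that branch and widen the window further. The phrase ``solving for $\rho$ close to $1$'' is where the circularity hides. The paper avoids this trap by normalizing by the \emph{larger} of the two lengths, so its unknown $t=1/\rho$ lives in $[0.1339,1]$ and its polynomial is negative on all of $[0.1,0.92]$ (condition \eqref{condiq}); the forbidden region reaches below the rough bound $0.1339$, which is exactly why Lemma \ref{roughesti} closes the argument there. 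The price the paper pays is the four-case analysis with the descent induction along monotone runs, whose whole purpose is to bound the dangerous outer neighbour by the factor $1/0.92$ rather than by a power of $\rho$.

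Second, even on the good branch your one-sided treatment (dropping the $j_{k_0+2}$-term and bounding $j_{k_0-1}^2/(j_{k_0}j_{k_0+1})\leq\rho^3$) cannot produce the stated constant. Your inequality saturates at the fixed point $\rho^*\approx 1.0178$ of $\rho=1+\kappa\rho^3$, i.e. $\tau\approx 0.983$, and iterating it does not improve this; but $0.99-0.05h^2$ is $\approx 0.99$ for small $h$, which is strictly stronger. The paper reaches $0.99$ only through its bootstrap, in which \emph{both} outer neighbours are known to within the factor $0.92$, so that the dangerous combination $1-u^2+tv^2-t^3$ nearly cancels at $t\approx u\approx v\approx 1$ (its value at $t=1$ is $\approx 0.335$, making its contribution $0.335\,\kappa\approx 0.0056$ instead of $\kappa\approx 0.0168$). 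Your crude bounds destroy precisely this cancellation, because they replace $j_{k+1}^3-j_{k+1}j_{k-1}^2\approx 0$ and $j_kj_{k+2}^2-j_k^3\approx 0$ by worst-case terms of size $\kappa\rho^3$. To repair the proof you would need (i) either an a priori ratio bound better than $7.15$ or a formulation whose forbidden region covers the whole range left open by Lemma \ref{roughesti}, and (ii) two-sided control of both outer neighbours in a final pass --- at which point you have essentially reconstructed the paper's case analysis, descent induction, and bootstrap.
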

Before showing the proof of this result, let us make some comments about its consequences.

\begin{remark}\label{rem3.6} Since the maximal length of any arc is less than $\pi/3$ (a maximal arc joins two points on the outercircle
being tangent to the incircle: this computation is done for example in \cite{HL}), we deduce from the
theorem that $\tau\geq 0.93$ and the smaller is $h$, the better will be the estimate.
For example, for $h\leq 0.45$, we get $\tau \geq 0.97$. In some sense, the optimal domain
is close to a regular Reuleaux polygon. Note that if the polygon has $2N+1$ sides, the smallest one
has a length that is at least $\tau^N h$ (because there is at most $N-1$ arcs between the largest and 
the smallest if we turn in the good direction).
\end{remark}

We can deduce from Theorem \ref{theolengths} a bound for the maximal length of an arc
of a $2N+1$ optimal Reuleaux polygon.
\begin{proposition}\label{propmaxlength}
Let $h_N$ be the maximal length of an arc of a $2N+1$ Reuleaux polygon satisfying the optimality
conditions. Let us denote by $\tau_N$ the rate between two consecutive lengths as defined in Theorem \ref{theolengths}. Then
\begin{equation}\label{maxlength}
h_N\leq \frac{(1-\tau_N)\, \pi}{1+\tau_N - 2\tau_N^{N+1}}.
\end{equation}
\end{proposition}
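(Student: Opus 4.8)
The plan is to combine Barbier's theorem with the consecutive-length estimate of Theorem \ref{theolengths}, summing a geometric series over the cyclic arrangement of the arcs. First I would recall that, by Barbier's theorem, every Reuleaux polygon of width $1$ has perimeter $\pi$, so the arc lengths satisfy $\sum_{k=1}^{2N+1} j_k = \pi$. Let $k_0$ be an index realizing the maximum, that is $j_{k_0} = h_N$.

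Next I would exploit the chain of estimates of Theorem \ref{theolengths} along the cyclic index $1,\dots,2N+1$ (read modulo $2N+1$). Since the bound $\tau_N j_{k+1}\le j_k\le \tau_N^{-1} j_{k+1}$ holds for every pair of consecutive indices, and in particular gives $j_{k}\ge \tau_N j_{k\pm 1}$ in either direction, iterating it starting from $k_0$ yields, for every arc sitting at cyclic index-distance $d$ from $k_0$, the lower bound $j_k\ge \tau_N^{d}\,h_N$. The combinatorial heart of the argument is the count of these distances, which is exactly the observation already used in Remark \ref{rem3.6}: in a cycle of $2N+1$ nodes, from the fixed node $k_0$ there are precisely two nodes at each distance $d\in\{1,\dots,N\}$ (one on each side), and together with $k_0$ itself this accounts for $1+2N=2N+1$ arcs, the shorter way round reaching any arc in at most $N$ steps. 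Collecting these lower bounds gives
$$
\pi=\sum_{k=1}^{2N+1} j_k \ \ge\ h_N + 2\sum_{d=1}^{N}\tau_N^{d}\,h_N \ =\ h_N\left(1+2\sum_{d=1}^{N}\tau_N^{d}\right).
$$

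Finally I would evaluate the geometric sum $\sum_{d=1}^{N}\tau_N^{d}=(\tau_N-\tau_N^{N+1})/(1-\tau_N)$ and simplify, obtaining
$$
1+2\sum_{d=1}^{N}\tau_N^{d}=\frac{1+\tau_N-2\tau_N^{N+1}}{1-\tau_N},
$$
so that $\pi\ge h_N\,(1+\tau_N-2\tau_N^{N+1})/(1-\tau_N)$, which rearranges at once into \eqref{maxlength}. The only delicate point — and it is a matter of bookkeeping rather than of genuine difficulty — is to verify that the consecutive estimate of Theorem \ref{theolengths} is applied legitimately in the cyclic sense (including the wrap-around pair $j_{2N+1}$, $j_1$), and that the distance count is performed with the correct indexing, so that no arc is double-counted and each distance $1,\dots,N$ contributes exactly twice, as asserted in Remark \ref{rem3.6}.
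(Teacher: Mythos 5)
Your proposal is correct and follows essentially the same route as the paper: starting from the maximal arc, propagating the two-sided bound of Theorem \ref{theolengths} cyclically to get $j_k\geq \tau_N^d h_N$ at index-distance $d\leq N$, summing the resulting geometric series, and comparing with the total perimeter $\pi$ given by Barbier's theorem. Your extra bookkeeping about the cyclic distance count (two arcs at each distance $1,\dots,N$) is exactly the counting implicit in the paper's phrase ``its two neighbours \dots\ up to the farthest arcs,'' so there is nothing to add.
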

\begin{remark}
Thanks to this proposition, we obtain for example the following bounds for a $2N+1$ Reuleaux polygon
satisfying the optimality conditions: using the fact that the right-hand side of \eqref{maxlength} is decreasing in $\tau$ and iterating Theorem \ref{theolengths} to get better rates we have, we infer that the maximal length of one side $h_N^{max}$ and the minimal length of one side $h_N^{min}$ (that is computed as $\tau_N^N h_N^{max}$) satisfy: \\
\begin{center}
\begin{tabular}{c|c|c|c|c}
$N$ & $2N+1$ & $\tau_N$ & $h_N^{max}$ & $h_N^{min}$ \\ \hline
2 & 5 &  0.9687 & 0.6526 & 0.6123 \\
3 & 7 & 0.9791 &  0.4652 & 0.4367 \\
4 & 9 & 0.9834 & 0.3622   & 0.3387\\
5 & 11 & 0.9855 & 0.2971 & 0.2762 \\
6 & 13 & 0.9868 &  0.2522  & 0.2328 \\
7 & 15 & 0.9875 & 0.2194 & 0.2009 \\
8 & 17 & 0.9881 & 0.1944 & 0.1765 \\
9 & 19 & 0.9884 & 0.1746 & 0.1572 \\
\end{tabular}
   \captionof{table}{Table of rates, maximal and minimal lengths for Reuleaux polygons}\label{table1}
\end{center}

confirming that we are not far from regular Reuleaux polygons.
\end{remark}
\begin{proof}[Proof of Proposition \ref{propmaxlength}]
We start from the arc of length $h_N$. Its two neighbours have a length at least $\tau_N h_N$, the next
neighbours have a length at least $\tau_N^2 h_N$... up to the farthest arcs (in the enumeration)
 which have a length at least $\tau_N^N h_N$. Therefore, since the sum of all the lengths is equal to
 the perimeter $\pi$, we have the inequality
$$\pi \geq h_N\left(1+2\sum_{k=1}^N \tau_N^k\right)=h_N \frac{1+\tau_N -2\tau_N^{N+1}}{1-\tau_N}$$
therefore
\begin{equation}\label{majhN}
h_N\leq \frac{\pi (1-\tau_N)}{1+\tau_N -2\tau_N^{N+1}}.
\end{equation}
\end{proof}

The remaining part of the subsection is devoted to the proof of Theorem \ref{theolengths}. We start with some inequalities for the functions $U,G,F,H$ defined above, see \eqref{defU} and \eqref{defg}-\eqref{defH}. 
We assume that our variables satisfy the following conditions:
$$
x,y,z \in \left [0,\frac{\pi}{6}\right],\quad 2y\geq U(z)-z.
$$
We will also use everywhere, for the higher order terms the estimate \eqref{estimatesR}, namely $R\in [0.21132,0.22803]$.
Starting from
$$x-\frac{x^3}{6}\leq \sin x\leq x - 0.16439 x^3\ \;\mbox{for } 0\leq x\leq \frac{\pi}{6}$$

we get successively
\begin{equation}\label{estiu}
\frac{x}{1-R} + 0.1284 x^3 \leq U(x) \leq \frac{x}{1-R} + 0.1831 x^3
\end{equation}

$$
1-R - \frac{R^2x^2}{2(1-R)} - 0.05 x^4 \leq G(x) \leq 1-R - \frac{R^2x^2}{2(1-R)} - 0.02 x^4
$$
$$F(y,z)\leq 1-R-2(1-R)y^2-\frac{R^2 z^2}{2(1-R)}+2Ryz+S_1(y,z)$$
with a remainder
$$S_1(y,z)=  0.52579 y^4- 0.28176 y^3z +  0.06736 y^2z^2 +0.28376 yz^3- 0.03852 z^4.$$
In the same way
$$F(y,z)\geq 1-R-2(1-R)y^2-\frac{R^2 z^2}{2(1-R)}+2Ryz+S_2(y,z)$$
with a remainder
$$S_2(y,z)= 0.49223 y^4-0.30404 y^3z + 0.0403 y^2z^2 +0.19159  yz^3-0.02727 z^4.$$
Putting together these different estimates yields for $H(x,y,z)$ the following estimates:
\begin{equation}\label{estiH1}
H(x,y,z)\leq 4(1-R)y^2z+\frac{R^2}{1-R}(z^3-zx^2)-4Ryz^2+T_1(x,y,z)
\end{equation}
with a remainder which is a polynomial of degree 5 given by
\begin{align*}T_1(x,y,z)= & -0.04 x^4z +0.00732 x^3z^2 + 0.04491 x^2z^3 -0.98447 y^4z 
\\ & +0.60808 y^3z^2 -1.94515 y^2z^3 +0.26158 yz^4 +0.01257 z^5,
\end{align*}
and
\begin{equation}\label{estiH2}
H(x,y,z)\geq 4(1-R)y^2z+\frac{R^2}{1-R}(z^3-zx^2)-4Ryz^2+T_2(x,y,z)
\end{equation}
with a remainder given by
\begin{align*}
T_2(x,y,z)=& -0.1x^4z +  0.03774 x^2z^3 - 1.05158 y^4z 
\\ & +0.56352 y^3z^2 -2.21639 y^2z^3 -0.004 yz^4 + 0.02293 z^5.
\end{align*}
Now writing the optimality condition $H\left( \frac{j_{k-1}}{2}, \frac{j_{k}}{2}, \frac{j_{k+1}}{2}   \right)-
H\left( \frac{j_{k+2}}{2}, \frac{j_{k+1}}{2}, \frac{j_{k}}{2}   \right)=0$ and using estimates
\eqref{estiH1}, \eqref{estiH2} yields the two following inequalities
\begin{equation}\label{estiH3}
0\leq j_k j_{k+1}(j_k-j_{k+1}) + \frac{R^2}{4(1-R)}\left(j_{k+1}^3-j_{k+1}j_{k-1}^2+j_kj_{k+2}^2-j_k^3\right) +E_1(j_{k-1},j_k,j_{k+1},j_{k+2})/16
\end{equation}
with
\begin{eqnarray*}
E_1(j_{k-1},j_k,j_{k+1},j_{k+2})=-0.04 j_{k-1}^4j_{k+1}+0.00732 j_{k-1}^3j_{k+1}^2+0.04491 j_{k-1}^2j_{k+1}^3 \\
-0.02293 j_k^5 
-0.98047 j_k^4j_{k+1} +2.82447 j_k^3j_{k+1}^2-0.03774 j_k^3j_{k+2}^2
\\
-2.50867j_k^2j_{k+1}^3 +1.31316 j_kj_{k+1}^4 +0.1 j_kj_{k+2}^4
+0.01257 j_{k+1}^5
\end{eqnarray*}
and
\begin{equation}\label{estiH4}
0\geq j_k j_{k+1}(j_k-j_{k+1}) + \frac{R^2}{4(1-R)}\left(j_{k+1}^3-j_{k+1}j_{k-1}^2+j_kj_{k+2}^2-j_k^3\right) +E_2(j_{k-1},j_k,j_{k+1},j_{k+2})/16
\end{equation}
with
\begin{eqnarray*}
E_2(j_{k-1},j_k,j_{k+1},j_{k+2})=-0.1 j_{k-1}^4j_{k+1}+0.03774 j_{k-1}^2j_{k+1}^3
-0.01257 j_k^5\\ -1.31316 j_k^4j_{k+1} 
+ 2.50867 j_k^3j_{k+1}^2
-0.04491 j_k^3j_{k+2}^2-2.82447 j_k^2j_{k+1}^3
\\-0.00732 j_k^2j_{k+2}^3+0.98047 j_kj_{k+1}^4 +0.04 j_kj_{k+2}^4 + 0.02293 j_{k+1}^5.
\end{eqnarray*}
Note that the coefficient $R^2/(4(1-R))$ satisfies
\begin{equation}\label{016}
0.01415 \leq \frac{R^2}{4(1-R)} \leq 0.01684.
\end{equation}

We are going to make the proof of Theorem \ref{theolengths} in the case $j_k\leq j_{k+1}$ by using inequality \eqref{estiH3} which leads to a simple inequality for a polynomial of degree 3 easy to analyse. In the case
$j_k\geq j_{k+1}$ we can proceed exactly in the same way, but using inequality \eqref{estiH4} instead.

In the sequel we use the following notations: $j_{k+1}=j$ and $j_{k-1}=u j$, $j_k=t j$, $j_{k+2}=v j$.
The three numbers $t,u,v$ are positive and $t\leq 1$ by assumption.
First, we need an estimate of the remainder $E_1$, that can be written as
$$
E_1(u j , tj, j, vj)
=j^5\left(F_1(t,v) + F_2(u)\right)$$
with
$$
F_1(t,v)=   -0.02293  t^5 -0.98047 t^4 +  2.82447  t^3-0.03774 t^3 v^2 
 -2.50867 t^2 +1.31316 t + 0.1 t  v^4 + 0.01257
$$
and
$$
 F_2(u)=   -0.04 u ^4  +  0.00732 u^3+ 0.04491  u^2.
 $$
\begin{lemma}\label{estiE1}
For $v\leq 1$, we have $E_1(u j , tj, j, vj)\leq 0.71653873 j^5$ (for every $t\in [0,1]$ and every $u$).
\\
For $v\geq 1$, we have $E_1(u j , tj, j, vj)\leq (0.1v^4- 0.03774 v^2+ 0.65427873) j^5$ (for every $t\in [0,1]$ and every $u$). 
\end{lemma}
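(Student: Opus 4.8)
The plan is to reduce the statement to an elementary optimization of the explicit polynomials $F_1$ and $F_2$. Since $E_1(uj,tj,j,vj)=j^5\bigl(F_1(t,v)+F_2(u)\bigr)$ and $j>0$, it suffices to bound $F_1(t,v)+F_2(u)$ by $0.71653873$ (resp. by $0.1v^4-0.03774v^2+0.65427873$) over the admissible range $t\in[0,1]$, $u>0$, and $v\le 1$ (resp. $v\ge 1$). The crucial feature is that the two summands decouple: $F_2$ depends only on $u$, while $F_1$ depends only on $(t,v)$. Hence I would first compute $m:=\max_{u>0}F_2(u)$ once and for all, and then estimate $\max F_1$ separately on each of the two $v$-ranges.

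For $F_2(u)=-0.04u^4+0.00732u^3+0.04491u^2$ the derivative factors as $F_2'(u)=u\bigl(-0.16u^2+0.02196u+0.08982\bigr)$, whose unique positive root $u^\ast$ solves $0.16u^2-0.02196u-0.08982=0$; since $F_2(0)=0$ and $F_2\to-\infty$ at infinity, $u^\ast\simeq 0.821$ is the global maximizer, giving $m=F_2(u^\ast)\simeq 0.0161487$. The key structural observation for $F_1$ is that its $v$-dependence, namely $0.1\,t\,v^4-0.03774\,t^3v^2$, is a \emph{quadratic} in the variable $w:=v^2$. I would exploit this parabola in both regimes.

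In the regime $v\le 1$ (so $w\in[0,1]$) the parabola $0.1\,t\,w^2-0.03774\,t^3w$ opens upward, so its maximum over $[0,1]$ is attained at an endpoint; comparing $w=0$ and $w=1$ shows $w=1$ wins for every $t\in[0,1]$. Thus $\max_{v\le 1}F_1(t,v)=\phi(t)$, where $\phi(t):=F_1(t,1)=-0.02293t^5-0.98047t^4+2.78673t^3-2.50867t^2+1.41316t+0.01257$. A short calculation shows $\phi'(t)>0$ on $[0,1]$, so $\phi$ is increasing and $\max_{[0,1]}\phi=\phi(1)\simeq 0.70039$; adding $m$ yields $F_1+F_2\le \phi(1)+m=0.71653873$, which is the first assertion. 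In the regime $v\ge 1$ I would instead bound the \emph{excess} $F_1(t,v)-(0.1v^4-0.03774v^2)=\psi(t)+0.1(t-1)v^4+0.03774(1-t^3)v^2$, where $\psi(t)$ collects the $t$-only terms of $F_1$. In the variable $w=v^2\ge 1$ the $v$-part is now a \emph{downward} parabola (for $t<1$) with vertex at $w^\ast=0.1887(1+t+t^2)\le 0.5661<1$; hence on $w\ge 1$ it decreases and is maximal at $w=1$. This collapses the excess to its value at $v=1$, which equals $\phi(t)-0.06226$, and maximizing over $t$ gives $\phi(1)-0.06226+m=0.65427873$, matching the first constant continuously at $v=1$ (since $0.1-0.03774=0.06226$).

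The routine part is the substitution and the parabola-in-$w$ reductions; the genuine obstacle is the \emph{rigorous} one-variable polynomial optimization with the tight constants involved. Concretely, one must verify that $\phi'$ stays strictly positive on all of $[0,1]$ (not merely at sampled points) and pin down $m=F_2(u^\ast)$ precisely, because the final constants $0.71653873$ and $0.65427873$ are exactly $\phi(1)+m$ and $\phi(1)-0.06226+m$. I would discharge these by standard enclosures of the relevant polynomials — bounding $\phi'$ below by a crude interval/convexity estimate and solving the cubic $F_2'=0$ explicitly — which is elementary but demands careful bookkeeping of the coefficients.
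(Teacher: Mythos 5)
Your proposal is correct and follows essentially the same route as the paper's proof: maximize $F_2$ separately at $u^\ast\approx 0.8210$ (value $<0.01614873$), reduce the $v$-dependence of $F_1$ to the endpoint $v=1$ (the paper via the sign of $\partial F_1/\partial v$, you via the upward parabola in $w=v^2$), and conclude by monotonicity in $t$, so that the stated constants are exactly $F_1(1,1)+\max F_2$ and $F_1(1,v)+\max F_2$. The only cosmetic difference is in the regime $v\geq 1$, where the paper keeps $v$ and proves $\partial F_1/\partial t>0$ uniformly in $v$ (using $0.1v^4-0.11322t^2v^2\geq -0.0033$), while you first subtract the $v$-only part $0.1v^4-0.03774v^2$ and push the excess to $v=1$; both variants end with the same elementary check that a quartic polynomial is positive on $[0,1]$.
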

\begin{proof}
The expression containing $u$ being a polynomial of degree 4, it is straightforward to prove that the
maximum of $F_2$ on $\mathbb{R}_+$ is attained for $u\in [0.8210107,0.8210108]$  
and its value is less than $0.01614873$.

For $F_1(t,v)$, computing the derivative with respect to $v$, we see that this derivative is negative
when $v\leq \sqrt{0.07548/0.4}\, t$ 
and positive after. Since it is easy to check that, for all $t\in [0,1]$,
$F(t,0)< F(t,1)$ we see that, to maximize $F_1$, we must choose $v=1$ when $v\leq 1$, or keep
$v$ when $v\geq 1$. Now, the derivative of $F_1$ with respect to $t$ is
$$\frac{\partial F_1}{\partial t}=- 0.11465 t^4 -3.92188  t^3 +  8.4734  t^2 
 -5.01734 t +1.31316 - 0.11322 t^2 v^2 + 0.1 v^4.$$
The two last terms satisfy
$$0.1 v^4 - 0.11322 t^2 v^2  \geq 0.1 v^4 - 0.11322 v^2 \geq -0.0033,$$
thus we are led to study a polynomial of degree 4 in $t$ and it is immediate to check that this polynomial
is positive for any $t\in [0,1]$. Therefore, to maximize $F_1$ we must choose $t=1$. The conclusion follows.
\end{proof}

\medskip
We are now in a position to prove the theorem.
\begin{proof}[Proof of Theorem \ref{theolengths}]
We make the proof in two steps. In the first step, we prove that $\tau \geq 0.92$ 
and then, using this estimate we get the conclusion.\\
Assuming $j_k\leq j_{k+1}$, we have to consider four different cases:
\begin{itemize}
\item Case 1: $j_{k-1}\geq j_k$ and $j_{k+1}\geq j_{k+2}$
\item Case 2: $j_{k-1}\leq j_k$ and $j_{k+1}\geq j_{k+2}$
\item Case 3: $j_{k-1}\leq j_k$ and $j_{k+1}\leq j_{k+2}$
\item Case 4: $j_{k-1}\geq j_k$ and $j_{k+1}\leq j_{k+2}$
\end{itemize}
We start from the inequality established in \eqref{estiH3}: recalling the upper bound in \eqref{016} for $R^2/(4(1-R))$, we obtain
\begin{equation}\label{estiH3bis}
0\leq t(t-1)+0.01684(1-u^2+t v^2-t^3) + \frac{E_1(uj,tj,j,vj)}{16j^3}.
\end{equation}
The idea is to bound it from above in each of the four cases with a polynomial $q(t)$ of degree 3 in $t$ which satisfies the following properties:
\begin{equation}\label{condiq}
q(0)>0,\quad q(1)>0, \quad q\ \hbox{decreasing and then increasing in }[0,1]\,,\quad q<0 \ \hbox{in }[0.1,0.92].
\end{equation} 
The positivity of the polynomial, together with the estimate $t>0.1339$ provided in Lemma \ref{roughesti}, give $t>0.92$. In each case, we will have to consider a polynomial $q$,
depending on three coefficients $A,B,C$ given
by $q(t)= t(t-1)+0.01684(1+At-Bt^2-t^3) +C$.
It is immediate to check that if $A,B,C$ satisfy
\begin{equation}\label{condiqs}
0.015 \leq B\leq A\leq 1.2\ \mbox{and } \; 0<C\leq 0.0514
\end{equation}
then \eqref{condiq} holds true.

\smallskip

\noindent {\bf Case 1:} Here $u\geq t$ and $v\leq 1$, thus \eqref{estiH3bis}, Lemma \ref{estiE1}, and the fact that $j\leq \pi/3$, give
$$0\leq t(t-1) +0.01684(1-t^2+t-t^3)+0.04912.$$
This polynomial satisfies \eqref{condiqs} and then \eqref{condiq}.

\smallskip

\noindent {\bf Case 2:} Here by assumption $u\leq t$ and $v\leq 1$. Moreover, using Lemma \ref{roughesti}, we get $u\geq 0.1339 t$. These estimates, together with Lemma \ref{estiE1}, imply that
$$
0\leq t(t-1) +0.01684(1-0.1339^2 t^2+t-t^3)+0.04912.
$$ 
This polynomial satisfies \eqref{condiqs} and then \eqref{condiq}.

\smallskip

\noindent 
{\bf Case 3:} In this case, the four lengths are increasing and may belong to a sequence of
increasing numbers $j_{k-1} \leq j_k \leq j_{k+1} \leq j_{k+2} \ldots \leq j_m$ with $j_m\geq j_{m+1}$.
We proceed by a descent induction. From the case 2., we see that $j_{m-1}\geq 0.92 j_m$. 
Assume by induction that $j_{k+1}\geq 0.92 j_{k+2}$. 
Then $v\leq 1/0.92$.
To estimate $j_k$ in terms of $j_{k+1}$ we use the
same technique as above with 
$$
1-u^2+t v^2-t^3\leq 1-0.1339^2 t^2 + t/0.92^2 -t^3.
$$
We have also to change
the estimate for the remainder $E_1$: according to Lemma \ref{estiE1} and since $1\leq v\leq1/0.92$ 
we have here 
$$
\frac{E_1(uj,tj,j,vj)}{16 j^3} \leq (0.1/0.92^4-0.03774/0.92^2+0.65427873) \frac{j^2}{16}\leq 0.051355.
$$
All in all, \eqref{estiH3bis} gives
$$
0 \leq t(t-1) + 0.01684 (1-0.1339^2 t^2 + t/0.92^2 -t^3) + 0.051355.
$$
This polynomial satisfies \eqref{condiqs} and then \eqref{condiq}.

\smallskip

\noindent 
{\bf Case 4:} We proceed as in the third case, by induction starting at the last number $j_m$ of the
increasing sequence $j_k\leq  j_{k+1} \leq j_{k+2} \ldots \leq j_m$. Here the upper bound of \eqref{estiH3bis} is 
$$
t(t-1) + 0.01684(1-t^2+t/0.92^2-t^3)+ 0.051355.
$$
and satisfies \eqref{condiqs} and \eqref{condiq}.

\medskip
We now get a better estimate by using this number $0.92$, without replacing $j$ by its upper bound $\pi/3$. 
Since the statement is valid for any pair of consecutive lengths, coming back to $t,u,v$, we have: 
$t\leq 1, u\geq 0.92 t,0.92 \leq v\leq 1/0.92$. Using these stronger estimates in \eqref{estiH3bis}, we get
$$
0\leq q(t):=t(t-1) + 0.01684(1-0.92^2 t^2+t/0.92^2-t^3) + 0.04682 j^2.
$$
Here the constant term is the same as in Cases 3 and 4. The polynomial satisfies \eqref{condiq}. Therefore, if we show that its larger root is at least $0.99-0.05j^2$ we are done. In other words, we shall prove that $q(0.99-0.05 j^2)\leq 0$. Developing the computation, we obtain
$$
q(0.99-0.05j^2)\leq 10^{-3}(-3.672 + 0.722 j^2 + 2.34 j^4 + 0.002 j^6)
$$
and since the right--hand side is negative for $j\leq \pi/3$ the thesis follows.
\end{proof}

\subsection{Inradius of a Reuleaux polygon}\label{secinradius}
In this paragraph we give a general formula for the inradius of a Reuleaux polygon.

\begin{definition}\label{def-contact}
We say that $M\in \partial \Omega$ is a {\it contact point} if it belongs to the incircle.
\end{definition}

In the particular case in which there exist three contact points, two belonging to consecutive arcs, and the third belonging to the opposite arc, the inradius can be easily computed. 
\begin{lemma}\label{r5}
Let $\Omega$ be a Reuleaux polygon with $2N+1$ sides. Assume that the arcs $\gamma_1$, $\gamma_2$, and $\gamma_{2N+1}$ are tangent to the incircle. Then 
\begin{equation}\label{formula5}
r(\Omega)=1-\frac{1}{2\cos(j_1/2)}.
\end{equation}
\end{lemma}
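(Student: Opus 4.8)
The plan is to work out the position of the incircle explicitly using the three prescribed contact points. Let me place the origin $O$ at the center of the incircle, so that the incircle is $\partial B(0,r)$ with $r=r(\Omega)$. By hypothesis the three arcs $\gamma_1$, $\gamma_2$, $\gamma_{2N+1}$ are tangent to this incircle; since each arc $\gamma_k$ is itself part of a circle of radius $1$ centered at $P_k$, tangency of $\gamma_k$ to the incircle from inside means that $P_k$, $O$, and the contact point are collinear, with $|OP_k|=1-r$. Thus all three centers $P_1$, $P_2$, $P_{2N+1}$ lie on the circle of radius $1-r$ about $O$. The first step is therefore to record the single relation $|OP_k|=1-r$ for $k\in\{1,2,2N+1\}$.

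\smallskip

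\noindent\textbf{Exploiting the width-1 relations between centers.}
The next step is to use the combinatorial/angular structure of the Reuleaux polygon. The vertex shared by $\gamma_1$ and $\gamma_2$ is $P_2$ by the ordering conventions $P_{k+1}=P_k+e^{i\alpha_k}$, and the arcs $\gamma_1$ and $\gamma_{2N+1}$ are the two arcs adjacent to the vertex $P_1$. The key fact is that consecutive centers are at distance exactly $1$ from one another (this is the constant-width condition, reflected in $|P_k-P_{k+1}|=|e^{i\alpha_k}|=1$). In particular $P_1,P_2,P_{2N+1}$ form a configuration in which $|P_1P_2|=|P_1P_{2N+1}|=1$, and the angle at $P_1$ between the two unit segments $P_1P_2$ and $P_1P_{2N+1}$ equals the arc length $j_1$ of $\gamma_1$ (the arc $\gamma_1$ centered at $P_1$ subtends exactly this angle at its own center). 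So I would set up the isoceles triangle $P_1P_2P_{2N+1}$ with apex angle $j_1$ at $P_1$ and the two equal sides of length $1$.

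\smallskip

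\noindent\textbf{Locating $O$ and solving for $r$.}
Now all three of $P_1,P_2,P_{2N+1}$ lie on the circle of radius $1-r$ centered at $O$, i.e.\ $O$ is the circumcenter of the triangle $P_1P_2P_{2N+1}$ and $1-r$ is its circumradius. For the isoceles triangle with apex angle $j_1$ and legs of length $1$, the circumradius is obtained from the law of sines: the side $P_2P_{2N+1}$ opposite the apex has length $2\sin(j_1/2)$, and it subtends the apex angle $j_1$, so
\begin{equation*}
2(1-r)=\frac{|P_2P_{2N+1}|}{\sin j_1}=\frac{2\sin(j_1/2)}{\sin j_1}=\frac{2\sin(j_1/2)}{2\sin(j_1/2)\cos(j_1/2)}=\frac{1}{\cos(j_1/2)}.
\end{equation*}
Rearranging gives $1-r=\tfrac{1}{2\cos(j_1/2)}$, which is exactly \eqref{formula5}. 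The main point to verify carefully is that the angle subtended by $\gamma_1$ at its center $P_1$ is indeed the full angle $\widehat{P_2P_1P_{2N+1}}$: this follows because the neighbouring vertices of $P_1$ are precisely $P_2=P_1+e^{i\alpha_1}$ and $P_{2N+1}=P_1+e^{i\beta_1}$, so the apex angle equals $\beta_1-\alpha_1=j_1$. The step I expect to require the most care is confirming that the circumcenter of this triangle really coincides with the incircle center $O$ (rather than being some other distinguished point), i.e.\ that the three tangency conditions pin down $O$ uniquely; since three non-collinear points have a unique circumcenter and we have imposed $|OP_k|=1-r$ for all three, this is automatic once one checks $P_1,P_2,P_{2N+1}$ are not collinear, which holds because $0<j_1<\pi$.
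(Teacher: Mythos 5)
Your proof is correct and is essentially the paper's own argument: both rest on the observations that tangency forces $|OP_1|=|OP_2|=|OP_{2N+1}|=1-r$ and that $P_2=P_1+e^{i\alpha_1}$, $P_{2N+1}=P_1+e^{i\beta_1}$ are the endpoints of $\gamma_1$, at distance $1$ from $P_1$ and subtending the angle $\beta_1-\alpha_1=j_1$ there; the paper then reads off $(1-r)\cos(j_1/2)=\tfrac12$ from the isoceles triangle $P_1OP_2$ (whose base angle is $j_1/2$ by the congruence of the two triangles $P_1OP_2$ and $P_1OP_{2N+1}$), while you apply the law of sines to the triangle $P_1P_2P_{2N+1}$ with circumcenter $O$ and circumradius $1-r$ --- the same trigonometry up to the double-angle identity. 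One caveat, which does not affect validity: your combinatorial framing is off, since under the paper's conventions $\gamma_1$ and $\gamma_2$ share no vertex in general ($P_2$ is an endpoint of $\gamma_1$ but the \emph{center} of $\gamma_2$), and the two arcs meeting at the vertex $P_1$ are $\gamma_2$ and $\gamma_{2N+1}$ rather than $\gamma_1$ and $\gamma_{2N+1}$; however, these remarks are never used, because the facts your computation actually relies on (listed above) are exactly right.
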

\begin{proof} The standing assumptions are summarized in Fig. \ref{fig-3}. 

\begin{figure}[h]                                             
\begin{center}                                                
{\includegraphics[height=3.5truecm] {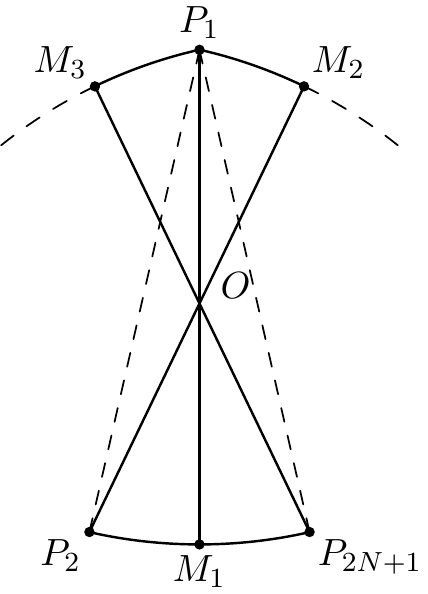}}                                                
\caption{{\it The configuration under study: contact points on two consecutive arcs $\gamma_2$, $\gamma_{2N+1}$, and on the opposite arc $\gamma_1$.}}\label{fig-3}
\end{center}                                                  
\end{figure}

Since the arcs $\gamma_1$,$\gamma_2$, and $\gamma_{2N+1}$ are tangent to the incircle, denoting by $M_1$, $M_2$, and $M_3$ the three contact points, respectively, we infer that the length of $OM_1$, $OM_2$, and $OM_3$ are equal to $r(\Omega)$, so that the length of $OP_1$, $OP_2$, and $OP_{2N+1}$ are equal to $1-r(\Omega)$. In particular, the two triangles $P_1OP_2$ and $P_1OP_{2N+1}$ are congruent. Let us consider one of the two triangles: it is isosceles, with base of length 1, legs of length $1-r(\Omega)$, and base angle of amplitude $j_1/2$. Therefore, we conclude that
$$
\big(1-r(\Omega)\big) \cos(j_1/2)= \frac12.
$$
This concludes the proof.
\end{proof}

\begin{remark}\label{penta}
The described situation in Lemma \ref{r5} always occurs for regular Reuleaux polygons (actually, in this case all the boundary arcs are tangent to the incircle) and for all the Reuleaux pentagons.
\end{remark}

The previous situation is a particular case: what remains true in general is the existence of three contact points which do not lie in the same half-plane (limited by a line going through the origin); what changes is the number of boundary points between pairs of contact points. In order to clarify this fact, we need to introduce the notion of {\it sector}.

\begin{definition}\label{def-sec} 
Let $M_1,M_2,M_3$ be three contact points (labeled in the direct sense) with polar angles $t_1,t_2,t_3$, and not lying in the same half-plane (limited by a line going through the origin). The segments joining these contact points with their opposite boundary points (with polar angles $t_i+\pi$) pass through the origin and identify a partition of the interval $[0,2\pi]$ in six parts, that we call {\it sectors}:
$$[t_2,t_1+\pi],[t_1+\pi,t_3],[t_3,t_2+\pi],[t_2+\pi,t_1],[t_1,t_3+\pi],[t_3+\pi,t_2].$$
Here the angles are intended modulo $2\pi$. The length of the sector $[t_i, t_{i-1}+\pi]$, $i\in \mathbb Z_3$, is denoted by $u_i$ or, when no ambiguity may arise, simply by $u$. 
\end{definition}

\begin{remark}
Note that each sector  $[t_i, t_{i-1}+\pi]$, $i\in \mathbb Z_3$, is coupled with the opposite sector $[t_i+\pi, t_{i-1}]$ (again, angles intended modulo $2\pi$) which has the same length.
\end{remark}

Beside the length, we associate to each sector another characteristic parameter. To fix the ideas, let us consider the first sector $[t_2,t_1+\pi]$. Up to relabeling the indexes, we may assume that $M_1$ belongs to the boundary arc $\gamma_1$ centered at $P_1$. Accordingly, $M_2$ lies on the boundary arc $\gamma_{2m}$ centered at $P_{2m}$, for some $m$. Going along the boundary in the direct sense, namely in counter-clockwise sense, between $M_2$ and $P_1$, we find $P_{2m-1}, P_{2m-3}, \ldots, P_3, P_1$; whereas between $P_{2m}$ and $M_1$ we find the vertexes $P_{2m}, P_{2m-2}, \ldots, P_4, P_2$ (see also Fig. \ref{fig-t}).
\begin{figure}[h]                                             
\begin{center}                                                
{\includegraphics[height=4truecm] {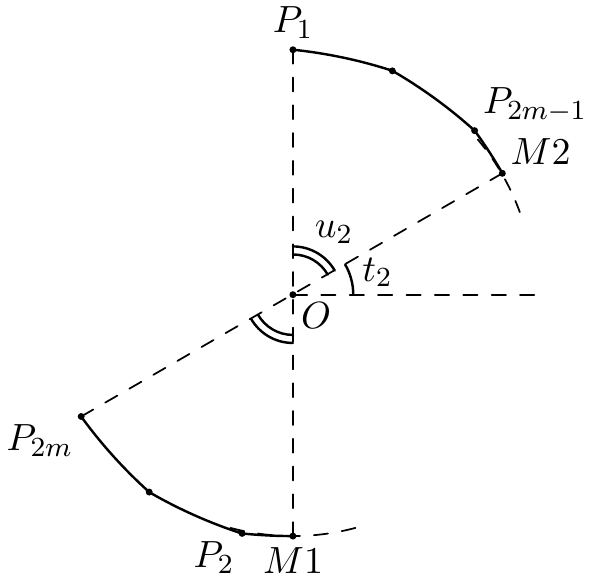} \quad \includegraphics[height=4truecm] {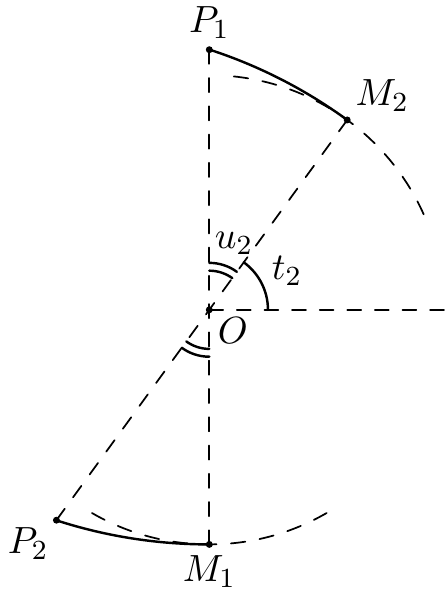}}                                                
\caption{{\it Left: the families $P_1,\ldots, P_{2m}$ and $P_2,\ldots,P_{2m}$ associated to two contact points $M_1$ and $M_2$. Right: an example with $m=1$.}}\label{fig-t}
\end{center}                                                  
\end{figure}

This leads us to define 
inside the sector $[t_2,t_1+\pi]$ the sequence of numbers $t_2<x_1<x_2< \ldots <x_{2m-2}<x_{2m-1}<t_1+\pi$ where
$$x_1=\beta_{2m},x_2=\alpha_{2m-2},x_3=\beta_{2m-2},\ldots x_{2m-2}=\alpha_2,x_{2m-1}=\beta_2.$$

As a function of the parameters $m$ and $u$, the inradius is given by the following.
\begin{lemma}\label{lem-rmt} Let $\Omega$ be a Reuleaux polygon. Let $u$ and $m$ be the two parameters of a sector, as in Definition \ref{def-sec}. Then 
\begin{equation}\label{formulainradius}
r(\Omega) = 1-\frac{\sum_{k=2}^{2m} \cos \beta_k}{\sin u} = 1- \frac{\sum_{k=1}^{2m-1} (-1)^{k-1}\cos x_k}{\sin u}.
\end{equation}
\end{lemma}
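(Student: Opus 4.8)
The plan is to evaluate the chord $P_1-P_{2m}$ joining the centres of the two arcs that carry the contact points $M_1$ and $M_2$ in two independent ways, and then to compare one well-chosen projection of the two expressions. The first ingredient is the geometric meaning of a contact point. If $M_1\in\gamma_1$ lies on the incircle, then the circle of radius $1$ centred at $P_1$ (which carries $\gamma_1$) is internally tangent to the incircle at $M_1$; hence $O$, $M_1$, $P_1$ are collinear with $|OM_1|=r:=r(\Omega)$ and $|P_1M_1|=1$. Consequently $|OP_1|=1-r$ and $P_1=(1-r)e^{i(t_1+\pi)}$ sits on the ray opposite to $OM_1$; that is, $P_1$ is precisely the boundary point opposite to $M_1$, of polar angle $t_1+\pi$. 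The identical argument applied to $M_2\in\gamma_{2m}$ yields $P_{2m}=(1-r)e^{i(t_2+\pi)}$. Since both centres lie at distance $1-r$ from $O$, the inradius enters only as a common scale factor, which the two-way computation is meant to isolate.

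For the first evaluation I would project onto the tangent to the incircle at $M_1$, i.e.\ onto the unit vector $n_1$ obtained by rotating $OM_1$ by $\pi/2$. By rotation invariance of $r$ and of the sector length $u$ I am free to fix the frame so that $n_1$ is horizontal, equivalently $M_1$ sits at the bottom of the incircle ($t_1=-\pi/2$) and $P_1$ on the positive vertical axis. Since the sector $[t_2,t_1+\pi]$ has length $u=(t_1+\pi)-t_2$, this normalisation forces $t_2=\pi/2-u$. Substituting into $P_1-P_{2m}=(1-r)\bigl(e^{i(t_1+\pi)}-e^{i(t_2+\pi)}\bigr)$ and reading off the horizontal component gives $(1-r)\sin u$; the same value is obtained coordinate-free from $P_1\cdot n_1=0$ and $P_{2m}\cdot n_1=-(1-r)\sin u$, so that $(P_1-P_{2m})\cdot n_1=(1-r)\sin u$.

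For the second evaluation I would use the defining relation $P_{k-1}=P_k+e^{i\beta_k}$, valid for every $k$, which makes the edge vectors telescope:
\[
P_1-P_{2m}=\sum_{k=2}^{2m}(P_{k-1}-P_k)=\sum_{k=2}^{2m}e^{i\beta_k}.
\]
This identity is purely algebraic and frame-independent; in the normalised frame its horizontal component is $\sum_{k=2}^{2m}\cos\beta_k$. Comparing with the first evaluation gives $\sum_{k=2}^{2m}\cos\beta_k=(1-r)\sin u$, which is the first equality in \eqref{formulainradius} after solving for $r$.

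The second equality in \eqref{formulainradius} is then a bookkeeping rewriting. Using $\beta_{k+1}\equiv\alpha_k+\pi$ (mod $2\pi$), hence $\cos\alpha_k=-\cos\beta_{k+1}$, the even-indexed terms of the alternating sum $\sum_{k=1}^{2m-1}(-1)^{k-1}\cos x_k$ (with $x_1=\beta_{2m}$, $x_2=\alpha_{2m-2}$, \dots, $x_{2m-1}=\beta_2$) convert into cosines of consecutive $\beta$'s, and the whole sum collapses to $\sum_{k=2}^{2m}\cos\beta_k$. I expect the main obstacle to be the geometric and angular bookkeeping of the first two steps: identifying the opposite boundary points $P_1,P_{2m}$ with the arc centres, pinning the polar angles $t_1,t_2$ to the sector length $u$, and normalising the frame so that the projection onto $n_1$ becomes the bare cosine sum $\sum\cos\beta_k$. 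Once the orientation is fixed, the telescoping and the final trigonometric simplification are routine.
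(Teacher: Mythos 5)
Your proposal is correct and follows essentially the same route as the paper: the paper likewise writes $P_1$ in two ways — as $(0,1-r)$ in the frame where $M_1=(0,-r)$, and as $P_{2m}+\sum_{k=2}^{2m}e^{i\beta_k}$ with $P_{2m}=-(1-r)e^{it_2}$ — and reads off the horizontal component, then obtains the second equality from $\alpha_k=\beta_{k+1}-\pi$ exactly as you do. Your evaluation of the chord $P_1-P_{2m}$ projected on the tangent at $M_1$ is the same computation in slightly different packaging, with the added (correct) justification of the tangency/collinearity fact that the paper leaves implicit in its Definition of a sector.
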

\begin{proof}
Throughout the proof, for brevity we set $r=r(\Omega)$. Without loss of generality, we may assume to work with the first sector, delimited by $M_2$ and $P_1$. Up to a rigid motion, $M_1=(0,-r)$. Accordingly, $t_1=3\pi/2$, and the sector under study is $[t_2,\pi/2]$. The statement simply follows by writing $P_1$ in two different ways: by construction, $P_1=(0,1-r)$; on the other hand, exploiting the rule $P_j= P_{j+1} + e^{i \beta_j}$ and $P_{2m}=-(1-r)e^{it_2}$, we get
$$
P_1=-(1-r) e^{i t_2} + \sum_{k=2}^{2m} e^{i \beta_k}. 
$$
Taking the projections on the horizontal and vertical components, we conclude that
\begin{equation}\label{sys}
\left\{
\begin{array}{ccc}
0=-(1-r)\cos t_2 +\sum_{k=2}^{2m} \cos \beta_k \\
\smallskip
\\
(1-r)= -(1-r)\sin t_2 +\sum_{k=2}^{2m} \sin \beta_k .
\end{array}
\right.
\end{equation}
The first line of the system, recalling that the length of the sector here is $\pi/2-t_2$, gives the first statement. The second one comes from the definition of the $x_k's$ and the relation $\alpha_k=\beta_{k+1}-\pi$.
\end{proof}
\begin{remark}
Notice that we do not require $M_2$ to be the ``first'' contact point met in the path. Moreover, notice that $m$ and $u$ do not depend on the orientation chosen.
\end{remark}

We conclude the paragraph with some estimates for the length of a sector, which will be crucial in the next section. 
\begin{lemma}\label{lengthsector}
The length $u$ of any sector satisfies
\begin{equation}\label{lensec1}
u\geq 2\left(\sqrt{1-2r}+r(2\arctan(\sqrt{4(1-r)^2-1})-\arccos\left(\frac{r}{1-r}\right)\right) 
\end{equation}
where $r$ in the inradius of the Reuleaux polygon.\\
In particular, for a Reuleaux polygon with an inradius $r\leq r_0$ (for example
an optimal Reuleaux polygon), we have
\begin{equation}\label{lensec2}
0.9926 \leq u \leq 1.1563 .
\end{equation}
\end{lemma}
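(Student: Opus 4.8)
The plan is twofold: first establish the pointwise lower bound \eqref{lensec1}, and then deduce the two numerical estimates \eqref{lensec2} from it.

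For \eqref{lensec1} I would fix a sector and normalize it exactly as in the proof of Lemma~\ref{lem-rmt}: after a rigid motion the contact point opposite to the sector is $M_1=(0,-r)$, the corresponding vertex is $P_1=(0,1-r)$, and the sector $[t_2,t_1+\pi]$ becomes $[t_2,\pi/2]$, so that $u=\pi/2-t_2$. I would then record the structural constraints that a Reuleaux polygon of width $1$ imposes on the boundary joining the contact point $M_2=re^{it_2}$ to $P_1$: every arc has radius $1$; every vertex lies in the disk of radius $1-r$ centered at $O$, since otherwise its arc would cut the incircle, and in particular $P_1$ sits on the circumcircle because the support line of $\Omega$ at the contact point $M_1$ must be tangent to the incircle; and the boundary stays outside the incircle, touching it only at contact points. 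Minimizing the angular span $u$ of such a boundary is the heart of the matter.

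I would reduce this minimization to the explicit extremal configuration furnished by the analysis of \cite{HL} (recalled in the Appendix), namely the shape of smallest area with prescribed inradius $r$, whose boundary in the sector stays as close to the incircle as the width constraint permits. For that configuration $u$ can be computed by assembling the three natural quantities attached to the annulus between the incircle (radius $r$) and the circumcircle (radius $1-r$): the tangent length $\sqrt{(1-r)^2-r^2}=\sqrt{1-2r}$; the angle $\phi_0:=\arccos\!\big(r/(1-r)\big)$ subtended at $O$ by such a tangent segment; and the half-aperture $\theta_0:=\arccos\!\big(1/(2(1-r))\big)=\arctan\!\sqrt{4(1-r)^2-1}$ of an arc of radius $1$ tangent to the incircle and cut by the circumcircle. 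Inserting these into the identity $(1-r)\sin u=\sum_k\cos\beta_k$ of Lemma~\ref{lem-rmt}, and passing to the limit in the part of the boundary that hugs the incircle (where the accumulated arc length equals $r$ times the swept angle), yields
\[
u\ \geq\ 2\Big(\sqrt{1-2r}+r\big(2\theta_0-\phi_0\big)\Big),
\]
which is precisely \eqref{lensec1}.

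For \eqref{lensec2} I would first check that the right-hand side of \eqref{lensec1}, viewed as a function $f(r)$, is decreasing on the admissible interval $[1-1/\sqrt3,\,r_0]$ (by differentiation, or simply by evaluating at the two endpoints and invoking smoothness). Hence a Reuleaux polygon with inradius $r\leq r_0$ satisfies $u\geq f(r)\geq f(r_0)\geq 0.9926$, which is the lower estimate. The upper estimate then comes for free: the three sectors and their opposites tile $[0,2\pi]$, so that $u_1+u_2+u_3=\pi$, and therefore any single sector obeys $u=\pi-u'-u''\leq \pi-2f(r_0)\leq 1.1563$.

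I expect the genuine obstacle to be the middle step: rigorously identifying the span-minimizing configuration among all admissible boundaries and justifying the limiting computation along the incircle-hugging part, since this is exactly where the fine structure of \cite{HL} enters. By contrast, the monotonicity check and the tiling identity $u_1+u_2+u_3=\pi$ are elementary once \eqref{lensec1} is in hand.
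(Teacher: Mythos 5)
Your reduction of \eqref{lensec2} to \eqref{lensec1} is exactly the paper's: monotonicity of the right-hand side in $r$, evaluation at $r_0$, and the tiling identity $u_1+u_2+u_3=\pi$ for the upper bound. But your proof of \eqref{lensec1} itself has a genuine gap, and it sits precisely where you say you expect the obstacle to be. You propose to identify the span-minimizing boundary with the area-minimizing configuration of \cite{HL} and then to pass to a limit along an ``incircle-hugging'' curve; neither step is justified, and neither is needed. Minimizing area at fixed inradius and minimizing the angular span of a sector are different variational problems, so the extremal shape of \cite{HL} cannot simply be invoked; worse, the hugging configuration you want to pass to is not a Reuleaux polygon at all, so the identity $(1-r)\sin u=\sum_k\cos\beta_k$ of Lemma \ref{lem-rmt} (which holds for actual polygons) cannot be ``inserted'' into it. That identity also cannot do the job you ask of it: it relates $\sin u$ to a sum of cosines, whereas \eqref{lensec1} bounds the angle $u$ below by a sum of \emph{lengths} (a tangent segment plus an incircle arc), and your sketch never builds the bridge between these two kinds of quantities.

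The paper's proof supplies that bridge with two ideas absent from your proposal. First, since every boundary arc has radius $1$, its length equals its angular parameter span, and the spans on the two sides of the sector telescope: with $x_0=t_2$, $x_{2m}=\pi/2$, one gets
\begin{equation*}
u \;=\; L(M_2P_1)+L(P_2M_1),
\end{equation*}
i.e.\ the sector length is exactly the total boundary length of the two opposite boundary portions. Second, both portions are curves inside the annulus $\{r\leq |X|\leq 1-r\}$ joining $M_2$ (resp.\ $M_1$) to $P_1$ (resp.\ $P_2$), so each is at least as long as the annular geodesic from $M_2$ to the point $P_0$ where the arc centered at $P_2$ meets the outercircle (convexity guarantees $P_1$ lies beyond $P_0$). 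This geodesic is explicitly a tangent segment of length $\sqrt{1-2r}$ followed by an incircle arc of length $r\bigl(2\arctan\sqrt{4(1-r)^2-1}-\arccos\bigl(\tfrac{r}{1-r}\bigr)\bigr)$, giving $u\geq 2\,geod(M_2,P_0)$, which is \eqref{lensec1}. The geodesic comparison is a pure metric inequality valid for \emph{every} admissible boundary, so no extremal configuration ever needs to be identified --- this is exactly what lets the paper bypass the step your proposal leaves open. You did correctly reverse-engineer the three geometric ingredients (tangent length, the angle $\ell(r)$, and $\arccos(r/(1-r))$), but without the telescoping identity and the geodesic comparison they do not assemble into a proof.
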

\begin{proof}
We work with the first sector, delimited by $M_2$ and $P_1$. Up to a rigid motion, we may assume that $M_1=(0,-r)$, so that $P_1=(0,1-r)$. In particular, $t_1=3\pi/2$ and the sector under study is $[t_2,\pi/2]$ with length $u:=u_2=\pi/2-t_2$. The length of the boundary of $\Omega$ between $M_2$ and $P_1$ is
$$L(M_2P_1)=x_1-t_2+x_3-x_2+x_5-x_4+\ldots + x_{2m-1} - x_{2m-2} .$$
In the same way, the length of the opposite boundary from $P_2$ to $M_1$ is
$$L(P_2M_1)=x_2-x_1+x_4-x_3+\ldots +\frac{\pi}{2}-x_{2m-1}.$$
Therefore, by addition $u=\frac{\pi}{2} - t_2 = L(M_2P_1) + L(P_2M_1)$.
Now let us introduce the point $P_0$ defined as the intersection of the arc of circle
of radius 1, centered at $P_2$ with the outercircle (of radius $1-r$), see Figure \ref{fig-geod}.
\begin{figure}[h]                                             
\begin{center}                                                
{\includegraphics[height=4.5truecm] {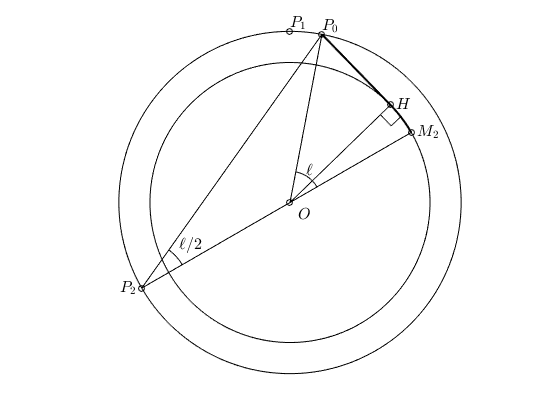}}                                                
\caption{{\it Computation of the length of a sector by comparison with a geodesic.}}\label{fig-geod}
\end{center}                                                  
\end{figure}
By convexity, the point $P_1$ is after the point $P_0$ (in the direct sense) on
the outercircle. 
We are going to make comparisons with the geodesics inside the annulus $\{r\leq |X|\leq 1-r\}$.
We denote by $geod(A,B)$ the length of the geodesic between two points $A,B$ in the annulus.
We have
$$L(M_2P_1) \geq geod(M_2,P_1) \geq geod (M_2,P_0).$$
The same inequality holds for $L(P_2M_1)$. Thus, $u\geq 2 geod(M_2,P_0)$.

Now, let us compute $geod(M_2,P_0)$.
This geodesic is made of
\begin{itemize}
\item a segment $P_0H$ joining $P_0$ to the point $H$ defining the tangent to the incircle
which goes through $P_0$ (see Figure \ref{fig-geod}),
\item the arc of the incircle \t{$M_2H$}.
\end{itemize}
By Pythagoras' theorem, $|P_0H|=\sqrt{1-2r}$.

Now we set $\ell(r)=2\arctan(\sqrt{4(1-r)^2-1})$. A simple trigonometric computation shows
that the angle $\widehat{M_2P_2P_0}$ is $\ell(r)/2$. Relations between the intercepting angles in a circle show that the angle 
$\widehat{P_0OM_2}$ is $\ell(r)$. Now the angle $\widehat{P_0OH}$ is $\arccos(r/(1-r))$, therefore
the arc \t{$M_2H$} has length which is $r(\ell(r) - \arccos(r/(1-r))$ and the inequality
\eqref{lensec1} follows.

For the inequality \eqref{lensec2}, we remark that the function in the right-hand side
of \eqref{lensec1} is decreasing and we compute its value for $r=r_0$ getting
the estimate from below. The estimate from above follows since the lengths of the three
sectors satisfy $u_1+u_2+u_3=\pi$.
\end{proof}
\begin{remark}\label{5791113} The number of points $x_k$ we can have in a sector is odd but variable. Nevertheless, we can bound this number 
 for low values of $N$. For example, for
a Reuleaux heptagon ($N=3$), there is necessarily at least one sector (two if we consider
its corresponding sector) with only one point inside. For a Reuleaux nonagon ($N=4$), either
there is one sector with only one point, or all sectors have three points.
For $N=5$ or $N=6$ there is at least one sector with either one or three points (because
if any sector has more than 5 points, we have a number of sides at least equal to 
$6$ (number of sectors) times $5$ (number of points) divided by $2 =15$).
\end{remark}

\section{Proof of the main theorem}
The key results of this section concern the maximization of $h$ in the subclass of Reuleaux polygons with a prescribed maximal number of sides, namely in $\mathcal B_N^1$, for $N\in \mathbb N$. They are:
\begin{proposition}\label{15}
A $2N+1$-Reuleaux polygon with $N\geq 7$ cannot be the maximizer.
\end{proposition}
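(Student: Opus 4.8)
The plan is to show that for $N \geq 7$, any Reuleaux polygon satisfying the optimality conditions must have inradius strictly larger than $r_0 = 0.4302$, which contradicts the upper bound $r(\Omega^*) \leq r_0$ established in Proposition \ref{propr0}. The strategy rests on two pillars already developed: first, the estimate from Theorem \ref{theolengths} and Remark \ref{rem3.6} telling us that all arc lengths are nearly equal (close to a regular Reuleaux polygon, with ratio $\tau_N \geq 0.9875$ for $N \geq 7$), and second, the explicit inradius formula \eqref{formulainradius} of Lemma \ref{lem-rmt} together with the sector-length bounds \eqref{lensec2} of Lemma \ref{lengthsector}.

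First I would fix a sector with parameters $(u,m)$ and rewrite formula \eqref{formulainradius},
$$
r(\Omega) = 1 - \frac{\sum_{k=1}^{2m-1}(-1)^{k-1}\cos x_k}{\sin u},
$$
making the dependence on the arc lengths explicit. Since the $x_k$ are determined by cumulative sums of the lengths $j_\ell$, and since by Remark \ref{rem3.6} these lengths all lie within a factor $\tau_N$ of one another, the alternating sum $\sum_{k=1}^{2m-1}(-1)^{k-1}\cos x_k$ can be compared with the corresponding quantity for a \emph{regular} Reuleaux polygon, where all arcs have equal length $h = \pi/(2N+1)$ and the inradius equals the minimal value $r(\mathbb{T})$-type expression from Lemma \ref{r5}. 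The second pillar is to control the numerator: because the $x_k$ are spread across a sector whose length is pinned between $0.9926$ and $1.1563$ by \eqref{lensec2}, and because $2m-1$ points fall inside, the alternating cosine sum is forced to be large enough that $r(\Omega)$ exceeds $r_0$.

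The key step I would carry out is a monotonicity/convexity argument: I would show that, among all admissible length configurations compatible with the near-regularity constraint $\tau_N j_{k+1}\leq j_k \leq \tau_N^{-1}j_{k+1}$ and with the sector-length constraint, the inradius is minimized (or comes closest to $r_0$) at the regular configuration, and then verify by direct computation that even this extremal regular value of $r$ for $2N+1$ sides with $N \geq 7$ is strictly above $r_0$. Intuitively, more sides means a rounder polygon means a larger inradius; the Reuleaux triangle has the smallest inradius precisely because it has the fewest sides. The near-regularity from Theorem \ref{theolengths} is exactly what prevents a maximizer from cheating by making one arc long and many arcs short to artificially deflate the inradius.

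The main obstacle I anticipate is making the comparison between the actual alternating sum and the regular one fully rigorous while accounting for the slack $\tau_N < 1$: the perturbation of each $x_k$ away from its regular position accumulates across the sector, so I must bound the total drift in the arguments of the cosines and show it does not push the numerator down enough to bring $r$ below $r_0$. Concretely, I would use Remark \ref{rem3.6}'s observation that the smallest arc is at least $\tau_N^N$ times the largest, feed the resulting uniform arc-length control into the formula, and combine it with the lower sector-length bound $u \geq 0.9926$ so that $\sin u$ in the denominator stays bounded away from its problematic regime. The delicate part is handling the variable number $2m-1$ of points per sector (cf. Remark \ref{5791113}); I would argue that the bound is worst (closest to failing) for the regular polygon and that for $N \geq 7$ a single clean estimate suffices, deferring the finitely many small cases $N = 2,3,4,5,6$ to a separate case-by-case analysis as announced in the introduction.
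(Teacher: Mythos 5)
Your overall skeleton coincides with the paper's: contradict Proposition \ref{propr0} by showing that a $2N+1$-Reuleaux polygon with $N\geq 7$ satisfying the optimality conditions has inradius larger than $r_0$, using the near-regularity of Theorem \ref{theolengths}, the sector-length bounds of Lemma \ref{lengthsector}, and the alternating-sum formula \eqref{formulainradius}. However, your key step --- the ``monotonicity/convexity argument'' asserting that among admissible length configurations the inradius is \emph{minimized} at the regular one, so that it suffices to compute the regular case --- is a genuine gap, and it points in the wrong direction. The regular Reuleaux polygon is extremal the other way: it is the roundest $(2N+1)$-gon, and perturbations tend to \emph{decrease} the inradius. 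For instance, for pentagons formula \eqref{formula5} gives $r=1-1/(2\cos(j_1/2))$, which depends only on the length $j_1$ of the arc opposite the two contact arcs and drops below the regular value as soon as a perturbation makes that arc longer than $\pi/5$. So verifying the regular polygon produces a value at the \emph{top} of the admissible range, not a lower bound, and nothing in your argument controls how far below that value the inradius of a non-regular but $\tau_N$-admissible configuration can fall. The extremality statement you invoke is exactly the difficulty; asserting it does not resolve it.

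The paper closes this gap quantitatively rather than by comparison with the regular polygon (Theorem \ref{quinze}): writing $C=\sum_k\int_{x_{2k-1}}^{x_{2k}}\sin t\,\de t$ and $C'=\sum_k\int_{x_{2k}}^{x_{2k+1}}\sin t\,\de t$ with $C+C'=\sin u$, it estimates $C-C'$ via the trapezoidal rule, the telescoping bound $|h_k-h_{k-1}|\le(\tau_N^{-1}-1)\min(h_{k-1},h_k)$, and a midpoint-rule estimate, so that the deviation from regularity enters only through explicit penalty terms proportional to $(1-\tau_N)/\tau_N$ and $h_N^2$. The resulting bound \eqref{minr} holds for \emph{every} admissible configuration and is independent of the number $2m-1$ of points in the sector (so the variable-$m$ issue you defer is handled automatically). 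Finally, since $h_N$ decreases and $\tau_N$ increases with $N$, the right-hand side of \eqref{minr} is increasing in $N$, and a single evaluation at $N=7$ (with $h_7=0.2194$, $\tau_7=0.9875$, and $\pi/3\le u\le 1.1563$ from \eqref{lensec2}) yields $r>r_0$ for all $N\ge 7$. To repair your proof, you would need to replace the unproved (and likely false) extremality claim by an explicit error analysis of this kind: that is the missing content.
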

\begin{proposition}\label{autres}
A $2N+1$-Reuleaux polygon with $2\leq N\leq 6$ cannot be the maximizer.
\end{proposition}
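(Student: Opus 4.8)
The plan is to argue by contradiction. Suppose some $(2N+1)$-Reuleaux polygon $\Omega$ with $2\le N\le 6$ were a maximizer of $h$ in $\mathcal B^1_N$. Then Proposition \ref{propr0} forces $r(\Omega)\le r_0=0.4302$, and I intend to use the optimality conditions of Section 3 to derive $r(\Omega)>r_0$, a contradiction. The inequality $r(\Omega)\le r_0$ is itself one of the working hypotheses: it lets me invoke Lemma \ref{lengthsector}, so that every sector has length $u\in[0.9926,1.1563]$, and together with Theorem \ref{theolengths} and Proposition \ref{propmaxlength} it confines all arc lengths to the narrow window $[h_N^{\min},h_N^{\max}]$ recorded in Table \ref{table1}. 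This near-regularity is the engine of the whole argument.

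First I would dispose of the pentagon, $N=2$. By Remark \ref{penta} the configuration of Lemma \ref{r5} always occurs, so $r(\Omega)=1-\tfrac{1}{2\cos(j_1/2)}$ for one of the five arc lengths $j_1$. Since $j_1\mapsto 1-\tfrac{1}{2\cos(j_1/2)}$ is decreasing and $j_1\le h_2^{\max}=0.6526$, I obtain $r(\Omega)\ge 1-\tfrac{1}{2\cos(h_2^{\max}/2)}>r_0$. It is essential that $j_1$ be bounded away from $\pi/3$: the degenerate value $j_1=\pi/3$ would only give $r=1-1/\sqrt3<r_0$, which is precisely why Table \ref{table1}, hence Theorem \ref{theolengths}, is indispensable.

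For $N\in\{3,4,5,6\}$ I would invoke the combinatorial dichotomy of Remark \ref{5791113}: up to the choice of the three contact points, $\Omega$ has a sector containing either one point ($m=1$) or three points ($m=2$). If $m=1$, the sector realizes exactly the configuration of Lemma \ref{r5}, and the pentagon computation gives $r(\Omega)\ge 1-\tfrac{1}{2\cos(h_N^{\max}/2)}>r_0$, comfortably since $h_N^{\max}$ decreases with $N$. If $m=2$, I would apply Lemma \ref{lem-rmt}: writing the three points as $x_2=c$, $x_1=c-j_3$, $x_3=c+j_2$ and projecting the closing relation \eqref{sys} onto both axes yields
\begin{equation*}
\cos(c-j_3)-\cos c+\cos(c+j_2)=(1-r)\sin u,\qquad \sin(c-j_3)-\sin c+\sin(c+j_2)=(1-r)(1+\cos u).
\end{equation*}
In the near-regular case $j_2=j_3=j$ these collapse to $c=\pi/2-u/2$ and the clean formula $r(\Omega)=1-\tfrac{2\cos j-1}{2\cos(u/2)}$. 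The right-hand side is smallest for the largest admissible $u$ and the smallest admissible $j=h_N^{\min}$, and inserting these values gives $r(\Omega)>r_0$ for each $N\in\{4,5,6\}$, the tightest instance being $N=6$, where the crude bound leaves a margin of about $5\times10^{-3}$.

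The main obstacle is the three-point sector. The inradius formula of Lemma \ref{lem-rmt} involves the absolute angular positions $x_k$, not merely the arc lengths, so one must use the tangency constraints anchoring the two ends of the sector to eliminate $c$, which is exactly what the two projection identities above accomplish. The more delicate point is that the clean formula assumes $j_2=j_3$, whereas a priori $\tau_N j\le j_i\le j/\tau_N$; since the $N=6$ estimate has such a thin margin, I would have to verify that the perturbation of $r$ caused by $|j_2-j_3|\le(\tau_N^{-1}-1)j$ stays well below it, either by a first-order expansion of the two identities in $(j_2-j_3)$ or, more robustly, by redoing the estimate $N$ by $N$ with the sharper sector-specific bound on $u$ (a genuine $m=2$ sector has $u$ close to $2j$ plus two partial arcs, hence near $u_{\min}$ rather than at the crude cap $1.1563$, which widens the margin considerably). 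Showing that the near-regularity slack cannot erode the positivity of $r-r_0$ is the crux; once it is settled, the contradiction with Proposition \ref{propr0} closes the proof for all $2\le N\le 6$.
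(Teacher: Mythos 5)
Your overall architecture is the paper's: argue by contradiction against Proposition \ref{propr0} by proving $r>r_0$, dispose of the pentagon via Lemma \ref{r5} and Table \ref{table1}, and split $N=3,\dots,6$ through Remark \ref{5791113} into a one-point sector or a three-point sector, analyzed via the closing relations \eqref{sys}. The pentagon step is correct and identical to the paper's. However, there are two genuine gaps in the remaining cases. First, in the one-point sector case you assert that the sector ``realizes exactly the configuration of Lemma \ref{r5}'' and conclude $r\ge 1-\tfrac{1}{2\cos(h_N^{\max}/2)}$. That identification is false: Lemma \ref{r5} requires \emph{three} tangencies (the two consecutive arcs meeting at $P_1$ \emph{and} the opposite arc $\gamma_1$), and it is precisely this symmetry that makes the triangles $P_1OP_2$ and $P_1OP_{2N+1}$ congruent and splits the angle $j_1$ into two equal halves. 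A one-point sector only provides two tangencies. What the closing system actually gives (this is the paper's Step 2) is $1-r=\tfrac{1}{2\sin x_1}=\tfrac{1}{2\cos(u/2)}$, where the half sector length satisfies $u/2\le j_1$, not $u/2\le j_1/2$; the correct bound is therefore the weaker $r\ge 1-\tfrac{1}{2\cos(h_N^{\max})}$, which still exceeds $r_0$ (it is $>0.44$ for $N\ge 3$). Your conclusion survives, but not by the argument you give.

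The second gap is the serious one: in the three-point sector case you establish the bound only under the symmetry assumption $j_2=j_3$, and you explicitly defer the asymmetric case to an unperformed ``first-order expansion'' or ``sector-specific bound on $u$''. That deferred verification is not routine bookkeeping; it is the actual content of the paper's Step 3. The paper never symmetrizes: it keeps $\delta=(h_1-h_2)/2$ throughout, writes $C=\cos(x_2-\delta)\left(1-4\sin^2(h/2)\right)+2\sin(x_2-\delta/2)\sin(\delta/2)$, controls $|\delta|$ by $(1-\tau_N)h_N^{\max}/2$ and $x_2$ by $(t+\pi/2)/2$ (up to passing to the opposite sector, since the vertical projection equation is not used to pin $x_2$), proves a monotonicity statement for $t\mapsto \cos(t/2+\alpha_N)/\cos t$ via a cubic polynomial, and establishes the admissible window $t\in[t_0,t_1]$ by a separate case analysis for $N=4,5,6$ --- exactly the sector-specific information your sketch gestures at. Quantitatively the deferral matters: with your crude cap $u\le 1.1563$ the symmetric margin at $N=6$ is about $5\times 10^{-3}$, while the perturbations produced by $|j_2-j_3|\le(\tau_N^{-1}-1)h_N^{\max}$ (which shift both the correction term $2\sin(\delta/2)$ and the solved value of $c$) are of comparable size, a few times $10^{-3}$. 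So the asymmetric case cannot be absorbed by inspection; until it is carried out --- for instance with the sharper per-$N$ bounds on $u$ that the paper encodes as the claim $t\in[t_0,t_1]\subset[\pi/11,4\pi/11]$ --- the proof is incomplete for $N=4,5,6$. Your use of both projection identities to eliminate $c$ exactly is a nice simplification in the symmetric case, but it is precisely the step that breaks when $j_2\neq j_3$.
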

Once proved the propositions (see the next two paragraphs), we are done:
\begin{proof}[Proof of Theorem \ref{maintheo}]
Let $N\in \mathbb N$ be fixed. In view of Proposition \ref{existN}, $h$ admits a maximizer $\Omega_N$ in the class $\mathcal B_N^1$. Thanks to Propositions \ref{15} and \ref{autres}, $\Omega_N$ is necessarily the Reuleaux triangle, in particular $\max_{\mathcal B_N^1} h= h(\mathbb T)$. 

Let us now consider the maximization problem in the whole class $\mathcal B^1$. As already shown in Proposition \ref{exist}, the problem admits a solution. Exploiting the density with respect to the Hausdorff metric of the Reuleaux polygons (cf. \cite{BF} and \cite{Buc}), the continuity of $h$ with respect to the Hausdorff metric (see again \cite[Proposition 3.1]{Pa2}), and the fact that $\{\mathcal B^1_N\}_N$ is an increasing family with $N$, we infer that 
$$
\max_{\mathcal B^1} h =  \sup_N \max_{\mathcal B^1_N} h= \lim_{N \to \infty} \max_{\mathcal B^1_N} h.
$$ 
As shown at the beginning of the proof, the sequence $\{\max_{\mathcal B^1_N} h\}_N$ is stationary, equal to $h(\mathbb T)$. This concludes the proof.
\end{proof}

The next two paragraphs are devoted to the proof of Proposition \ref{15} and \ref{autres}, respectively.

\subsection{Reduction to Reuleaux polygons with less than 15 sides}
We start by proving that an optimal Reuleaux polygon has less than 15 sides.
\begin{theorem}\label{quinze}
Let $\Omega$ be a $2N+1$-Reuleaux polygon satisfying the optimality condition \eqref{oc3}. 
Then its inradius satisfies
\begin{equation}\label{minr}
r\geq \frac{1}{2}-\frac{h_N}{4\sin u}-\frac{1-\tau_N}{4\tau_N}\left(1+\frac{h_N^2}{6}
\frac{u}{\sin u}\right)-\frac{h_N^2}{24} \frac{u}{\sin u}
\end{equation}
where $h_N$ is the maximal length of an arc (given in Proposition \ref{propmaxlength})
and $\tau_N$ is the rate between two consecutive lengths (given in Theorem \ref{theolengths})
and $u$ is the length of a sector, as in Lemma \ref{lengthsector}.
\end{theorem}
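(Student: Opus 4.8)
The plan is to start from the exact inradius formula of Lemma~\ref{lem-rmt}. Fixing a sector with parameters $(u,m)$ and adopting the rigid motion used in the proof of that lemma (so that $t_1=3\pi/2$ and the sector is $[t_2,\pi/2]$ with $t_2=\pi/2-u$), I use the alternating form of \eqref{formulainradius} to write
$$r = 1 - \frac{1}{\sin u}\,S,\qquad S:=\sum_{k=1}^{2m-1}(-1)^{k-1}\cos x_k,$$
where $t_2<x_1<\dots<x_{2m-1}<\pi/2$ are the angles defined just before Lemma~\ref{lem-rmt}. The crucial structural observation is that the consecutive gaps are \emph{exactly} arc lengths: from the relation $\alpha_{\ell-2}-\beta_\ell=j_{\ell-1}$ recorded in Subsection~\ref{ssR}, one gets $d_k:=x_{k+1}-x_k=j_{2m-k}$. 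Thus every gap is at most $h_N$, and two consecutive gaps are two consecutive arc lengths, so their ratio is controlled by $\tau_N$ through Theorem~\ref{theolengths}. Proving \eqref{minr} therefore reduces to finding a sharp \emph{upper} bound for $S$ of the shape $\tfrac12\sin u$ plus explicit $h_N$- and $(1-\tau_N)$-corrections.

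To isolate the leading term I would symmetrize. Grouping the alternating sum from the left and from the right gives the two identities $S=\cos x_1-\sum_{k\text{ even}}2\sin q_k\sin\frac{d_k}{2}$ and $S=\cos x_{2m-1}+\sum_{k\text{ odd}}2\sin q_k\sin\frac{d_k}{2}$, with $q_k:=\tfrac{x_k+x_{k+1}}{2}$; averaging them yields
$$S = \frac{\cos x_1 + \cos x_{2m-1}}{2} + \sum_{k=1}^{2m-2}(-1)^{k-1}\sin q_k \,\sin\tfrac{d_k}{2}.$$
The endpoint term is treated geometrically: $x_1$ and $x_{2m-1}$ lie inside $(t_2,\pi/2)$ at distance at most one arc length (hence at most $h_N$) from the sector ends $t_2=\pi/2-u$ and $\pi/2$, so monotonicity of the cosine gives $\tfrac12(\cos x_1+\cos x_{2m-1})\le \tfrac12\sin u+c\,h_N$; after dividing by $\sin u$ this is what produces the term $-\frac{h_N}{4\sin u}$ in \eqref{minr}.

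The core of the estimate is the interior alternating sum $\Sigma:=\sum_{k=1}^{2m-2}(-1)^{k-1}\sin q_k\sin\tfrac{d_k}{2}$, which I bound from above by pairing consecutive terms. Each block $\sin q_{2i-1}\sin\tfrac{d_{2i-1}}{2}-\sin q_{2i}\sin\tfrac{d_{2i}}{2}$ is a difference of nearly equal quantities: the midpoints satisfy $|q_{2i-1}-q_{2i}|\le h_N$, while $d_{2i-1},d_{2i}$ are two consecutive arcs, so Theorem~\ref{theolengths} gives $|d_{2i-1}-d_{2i}|\le \frac{1-\tau_N}{\tau_N}h_N$. Expanding $\sin\tfrac{d_k}{2}=\tfrac{d_k}{2}-\tfrac{d_k^3}{48}+\dots$ splits $\Sigma$ into two pieces handled by two different summations. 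The linear-in-deviation piece keeps the weight $\sin q_k$ and is summed as a one-sided Riemann sum, using $\sum_k \sin q_k\,d_k\approx \int_{t_2}^{\pi/2}\sin\theta\,\de\theta=\sin u$; this is precisely what converts the per-pair factor $\frac{1-\tau_N}{\tau_N}h_N$ into the sharp global coefficient $\frac{1-\tau_N}{4\tau_N}$ after division by $\sin u$. The quadratic Taylor remainders, by contrast, are bounded crudely via $\sum_k d_k^3\le h_N^2\sum_k d_k\le h_N^2 u$, which explains the appearance of the factor $\frac{u}{\sin u}$ in the terms $-\frac{h_N^2}{24}\frac{u}{\sin u}$ and $\frac{1-\tau_N}{4\tau_N}\cdot\frac{h_N^2}{6}\frac{u}{\sin u}$.

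I expect the main obstacle to be exactly this interior estimate: keeping all constants sharp at once, ensuring that the Riemann-sum comparison is one-sided in the direction that yields an \emph{upper} bound on $S$ (hence a lower bound on $r$), and checking that the $\sin q_k$ weights genuinely sum against the gaps to reproduce $\sin u$ rather than the cruder span $u$, so that the announced coefficients come out as stated. The numerical ranges $0.9926\le u\le 1.1563$ from Lemma~\ref{lengthsector}, $R\in[0.21132,0.22803]$ from \eqref{estimatesR}, together with the bounds on $h_N$ and $\tau_N$ from Proposition~\ref{propmaxlength} and Theorem~\ref{theolengths}, are then only used afterwards to make \eqref{minr} numerically effective in the reduction to $N\le 6$.
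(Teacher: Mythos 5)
Your skeleton is essentially the paper's: the same reduction via Lemma \ref{lem-rmt} to an upper bound for the alternating sum $S=\sum_{k}(-1)^{k-1}\cos x_k$ on a sector, the same role of $h_N$ and $\tau_N$, and your symmetrized identity is exact and correct (it is just $\cos x_k-\cos x_{k+1}=2\sin q_k\sin(d_k/2)$ applied to the decomposition $S=\tfrac12(C+C')+\tfrac12(C-C')$ that the paper handles by the trapezoidal rule, $C'$ being the complementary alternating sum). Your interior estimate — the step you flag as the main obstacle — actually works: with your pairing $(2i-1,2i)$ the midpoint-drift terms $(\sin q_{2i-1}-\sin q_{2i})\sin(d_{2i}/2)$ are negative (the $q_k$ increase and lie in $(0,\pi/2)$) and can be discarded, the $\tau_N$-difference terms sum against the gaps to $\frac{1-\tau_N}{4\tau_N}\bigl(\sin u+O(u h_N^2)\bigr)$, and your crude cubic bounds give the $\frac{u h_N^2}{24}$-type terms. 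The pairs also involve only full arcs, so Theorem \ref{theolengths} applies to them legitimately.

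The genuine gap is the endpoint term, which you treat as routine, and it is fatal for the stated constant. In your decomposition the \emph{entire} last gap sits in the endpoint: $\tfrac12\cos x_{2m-1}=\tfrac12\sin d_{2m-1}$, which in the worst case $d_{2m-1}=h_N$ is about $\tfrac{h_N}{2}$, not $\tfrac{h_N}{4}$; so your bound $\tfrac12(\cos x_1+\cos x_{2m-1})\le\tfrac12\sin u+c\,h_N$ holds only with $c=\tfrac12$. Even the refinement $\cos x_1=\sin(u-d_0)\le\sin u-d_0\cos u$ only lowers $c$ to roughly $\tfrac12(1-\tau_N^{N}\cos u)$, and on the largest sector (the one used later, where $u\ge\pi/3$, hence $\cos u\le\tfrac12$) this is still at least $\approx 0.27>\tfrac14$; no choice of pairing of the midpoint-weighted terms $\sin q_k\sin(d_k/2)$ does better, because adjacent terms carry their weight at \emph{different} points $q_k\neq q_{k+1}$ and the last term has no partner at all. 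This factor is not cosmetic: in the proof of Proposition \ref{15} the bound \eqref{minr} gives $r\ge 0.5-0.0634-0.0032-0.0025\approx 0.431$ against $r_0=0.4302$, a margin under $10^{-3}$; with $c=\tfrac12$ one gets $r\ge 0.37$, and even with $c=0.27$ one gets $r\ge 0.426<r_0$, so the reduction to $N\le 6$ collapses. The paper reaches $\tfrac14$ precisely through the trapezoidal rule: each gap integral $\int_{x_k}^{x_{k+1}}\sin t\,\de t$ is split into weights $\tfrac{d_k}{2}\sin x_k+\tfrac{d_k}{2}\sin x_{k+1}$ at its two endpoints, so that in $C-C'$ adjacent gaps cancel at their shared point up to the $\tau_N$-controlled difference $\tfrac{|d_k-d_{k-1}|}{2}\sin x_k$, leaving only the boundary weights $-\tfrac{d_0}{2}\sin x_0\le 0$ (dropped) and $+\tfrac{d_{2m-1}}{2}\sin x_{2m}=\tfrac{d_{2m-1}}{2}$, which the final averaging of $C+C'$ and $C-C'$ halves to $\tfrac{h_N}{4}$. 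Repairing your argument would force you to re-split each exact term $\cos x_k-\cos x_{k+1}$ as $\tfrac{d_k}{2}(\sin x_k+\sin x_{k+1})+O(d_k^3)$ — that is, to rewrite the paper's proof.
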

\begin{proof}
Following the notations of Section \ref{secinradius} we consider a sector $[t_2,\pi/2]$
with $2m-1$ points $t_1<x_1<x_2< \ldots <x_{2m-1}<\frac{\pi}{2}$.
For convenience, we will denote $x_0=t_2$ and $x_{2m}=\pi/2$.
According to Formula \eqref{formulainradius}, in order to bound from below the inradius, we need to estimate from above 
$$C:=\cos x_1 -\cos x_2 + \ldots - \cos x_{2m-2} + \cos x_{2m-1}$$
that can also be written $C=\sum_{k=1}^m \int_{x_{2k-1}}^{x_{2k}} \sin t\, \de t$. We also introduce $C'=\sum_{k=0}^{m-1} \int_{x_{2k}}^{x_{2k+1}} \sin t\, \de t$ so that
$C+C'=\int_{t_2}^{\pi/2} \sin t \, \de t = \cos t_2 =\sin u$ since $u=\frac{\pi}{2}-t_2$.

The idea of the proof is to use the trapezoidal rule to estimate both integrals $C$ and $C'$
taking advantage on the information we have on the lengths of each interval. Let us denote
$h_k=x_{k+1}-x_k$ and we recall that, in view of Theorem \ref{theolengths}, for any $k$: 
\begin{equation}\label{encad}
\tau_N h_{k-1} \leq h_k \leq \frac{h_{k-1}}{\tau_N}.
\end{equation}
Let us introduce the approximations of $C$ and $C'$ obtained by the trapezoidal rule:
\begin{eqnarray*}
C_h=\sum_{k=1}^m \frac{h_{2k-1}}{2}(\sin x_{2k-1} +\sin x_{2k})\\
C'_h=\sum_{k=0}^{m-1} \frac{h_{2k}}{2}(\sin x_{2k} +\sin x_{2k+1}).
\end{eqnarray*}
The classical error formulae in numerical integration provide
\begin{eqnarray*}
- \frac{h_N^2}{12}\sum_{k=1}^m h_{2k-1} \leq C-C_h\leq \frac{h_N^2}{12}\sum_{k=1}^m h_{2k-1}\\
- \frac{h_N^2}{12}\sum_{k=0}^{m-1} h_{2k} \leq C'_h-C'\leq \frac{h_N^2}{12}\sum_{k=0}^{m-1} h_{2k}\\
\end{eqnarray*}
that yields by adding the two inequalities and using $u=x_{2m}-x_0$
\begin{equation}\label{inte1}
-\frac{u h_N^2}{12} + C_h-C'_h \leq C-C' \leq +\frac{u h_N^2}{12} + C_h-C'_h \,.
\end{equation}
We write
$$C_h-C'_h=-\frac{h_0}{2} \sin x_0 +\sum_{k=1}^{2m-1} (-1)^{k-1} \frac{h_k-h_{k-1}}{2} \sin x_k
+\frac{h_{2m-1}}{2}\,.$$
Now, using \eqref{encad}
$$|h_k - h_{k-1}| \sin x_k \leq \left(\frac{1}{\tau_N}-1\right) \min (h_{k-1},h_k) \sin x_k
\leq \left(\frac{1}{\tau_N}-1\right) \frac{h_{k-1}+h_k}{2} \sin x_k.$$
Thus
$$C_h-C'_h \leq -\frac{h_0}{2} \sin x_0 + \frac{1}{2} \left(\frac{1}{\tau_N}-1\right)
\sum_{k=1}^{2m-1} \frac{h_{k-1}+h_k}{2} \sin x_k + \frac{h_N}{2}\,.$$
We use now the middle-point integration rule to estimate the term
$\sum_{k=1}^{2m-1} \frac{h_{k-1}+h_k}{2} \sin x_k$. First with the odd points $x_{2k-1}$
on intervals of length less than $2h_N$:
$$\left|\int_{x_0}^{x_{2m}} \sin t\, \de t - \sum_{k=1}^{m} (h_{2k-1}+h_{2k-2}) \sin x_{2k-1}\right|
\leq \frac{u (2 h_N)^2}{24}$$
then with the even points $x_{2k}$:
$$\left|\int_{x_1}^{x_{2m-1}} \sin t\, \de t - \sum_{k=1}^{m-1} (h_{2k-1}+h_{2k}) \sin x_{2k}\right|
\leq \frac{u (2 h_N)^2}{24}\,.$$
Therefore by addition
$$\sum_{k=1}^{2m-1} \frac{h_{k-1}+h_k}{2} \sin x_k \leq \frac{1}{2}(\cos x_0 +\cos x_1 -\cos x_{2m-1})+ \frac{u h_N^2}{6} \leq \cos x_0 + \frac{u h_N^2}{6}$$
and we infer
\begin{equation}
C_h-C'_h \leq \frac{1}{2} \left(\frac{1}{\tau_N}-1\right)(\cos x_0 + \frac{u h_N^2}{6})
+ \frac{h_N}{2}\,.
\end{equation}
Finally, using \eqref{inte1} together with $C+C'=\sin u$, we obtain
$$
C\leq \frac{\sin u}{2}+  \frac{h_N}{4} + \frac{1}{4} \left(\frac{1}{\tau_N}-1\right)(\sin u + \frac{u h_N^2}{6}) + \frac{u h_N^2}{24},
$$
which, combined with $1-r=C/\sin u$, gives \eqref{minr}.
\end{proof}
As a corollary, we can give the 
\begin{proof}[Proof of Proposition \ref{15}]
When $N$ increases,  according to Theorem \ref{theolengths} and Proposition \ref{propmaxlength},
the maximal length $h_N$ decreases while the rate $\tau_N$ increases. Therefore, the right-hand side of inequality \eqref{minr} is increasing with $N$. In other words, if we prove that
the inradius of a $15$-Reuleaux polygon (satisfying the optimality conditions)
is greater than $r_0$, it will also be true for any $2N+1$-Reuleaux polygon 
(satisfying the optimality conditions) with $N\geq 7$. Now, according to Proposition
\ref{propr0}, this shows that these Reuleaux polygons cannot be optimal.

We have seen in Lemma \ref{lengthsector}, Formula \eqref{lensec2}, that for an optimal domain (thus with an inradius less than
$r_0$), we can choose the largest sector whose length satisfies $\frac{\pi}{3} \leq u \leq 1.1563$.
This implies in particular $u/\sin u \leq 1.2633$ and $1/\sin u \leq 2/\sqrt{3}$.
Plugging these bounds in \eqref{minr} together with $h_N=0.2194$ and $\tau_N=0.9875$
(see Table \ref{table1}) provides an inradius $r>r_0$ that gives the thesis.
\end{proof}

\subsection{The case of polygons with a number of sides between 5 and 13.}
In this paragraph we rule out the intermediate cases, corresponding to Reuleaux polygons satisfying the optimality conditions and having a number of sides between $5$ and $13$. 
\begin{proof}[Proof of Proposition \ref{autres}] Throughout the proof, we consider a Reuleaux polygon satisfying the optimality conditions. Its inradius, for brevity, will be simply denoted by $r$. In view of Proposition \ref{propr0}, it is enough to show that $r>r_0$. The proof is organized as follows: first, we treat the case of Reuleaux pentagons ($N=2$); then we analyze the Reuleaux polygons with $N=3,4,5,6$, distinguishing the cases in which there is a sector of 1 point or not. Note that the former is always satisfied for heptagons ($N=3$); moreover, in the latter, there is always a sector with 3 points (see also Remark \ref{5791113}).
\medskip

\noindent {\it Step 1. The case of pentagons, $N=2$.} As already noticed in Remark \ref{penta}, for pentagons the inradius is given by formula \eqref{formula5}. Since $\mathcal H^1(\gamma_1)\leq h_2^{max}$, we get, thanks to Table 1:
$$
r\geq 1- \frac{1}{2\cos(h_2^{max}/2)}>0.47>r_0.
$$
This concludes the proof for pentagons.
\medskip

\noindent{\it Step 2. The case of a sector with $1$ point, for $N=3,4,5,6$}. Without loss of generality, up to a rotation, we may assume that such a sector is the segment $[t,\pi/2]$ with length $u= \pi/2-t$. According to this notation, we rewrite system \eqref{sys} as follows:
$$
\left\{\begin{array}{ccc}
&(1-r)\sin u=\cos x_1 \\
&(1-r)(1+\cos u)=\sin x_1,
\end{array}
\right.
$$
so that, taking the quotient, we get
$$
\tan(u/2)= \frac{1}{\tan x_1 }.
$$
Therefore
\begin{align*}
1-r& = \frac{\cos x_1 }{\sin(u)} = \frac{\cos x_1 (1+ \tan^2(u/2))}{2 \tan (u/2)}
\\ & = \frac{\cos x_1 (1 + 1/\tan^2(x_1)) \tan x_1}{2} \\
& = \frac{1}{2\sin x_1}.
\end{align*}
The value of $x_1$ is unknown, however, its distance from $\pi/2$ is at most the maximal length of one of the arcs: $x_1 \geq \pi/2-h_N^{max}$. Since $h_N^{max}$ is decreasing with respect to $N$, we get
$$
1-r\leq \frac{1}{2\sin (\pi/2-h_N^{max})}=\frac{1}{2  \cos (h_N^{max})}\leq \frac{1}{2  \cos (h_3^{max})},
$$
so that, using Table 1:
\begin{equation}\label{estimate1} 
r \geq 1- \frac{1}{2\cos(h_3^{max})}>0.44 >r_0.
\end{equation}
This concludes the proof of the step.
 
\medskip

\noindent {\it Step 3. The case of a sector with 3 points, for $N=4,5,6$.} Let $N$ be fixed. As in Step 2, without loss of generality, up to a rotation, we may assume that such a sector is $[t,\pi/2]$ with length $u=\pi/2-t$. By assumption, there exists a sector with 3 points $x_1,x_2,x_3$.  By Lemma \ref{lem-rmt}, we have
$$
r= 1- \frac{\left( \cos x_1 -\cos x_2 + \cos x_3 \right) }{\cos t}.
$$
We claim that
\begin{equation}\label{claimt}
t\in [t_0, t_1]\subset [\pi/11,4\pi/11].
\end{equation}
Let us assume the claim true (it will be shown at the end of the proof). In order to have a lower bound for $r$ we look for an upper bound for $C:= \cos x_1 -\cos x_2 + \cos x_3 $, in $[t_0, t_1]$. Let $h_1:=x_2-x_1$ and $h_2:=x_3-x_2$. Set $h:= (h_1+h_2)/2$ and $\delta:= (h_1-h_2)/2$. Therefore
\begin{align*}
C& =\cos(x_2-h -\delta) -  \cos x_2  + \cos(x_2 + h -\delta)
\\
& = \frac12 \left( \cos(x_2-h -\delta) - \cos x_2\right)  + \frac12 \left( \cos(x_2+h -\delta) - \cos x_2 \right) 
\\  & \quad + \frac12 \left( \cos(x_2-h -\delta) +\cos(x_2+h-\delta)\right)
\\
& = \cos(x_2-\delta) (1-4\sin^2(h/2))+2\sin(x_2-\delta/2)\sin(\delta/2).
\end{align*}
Without loss of generality, up to consider the opposite sector, we may assume that 
$$
x_2\geq \frac{t+\pi/2}{2}.
$$
Moreover, there holds
$$
|\delta|\leq (1-\tau_N)\frac{h_N^{max}}{2}.
$$
These two bounds give
\begin{eqnarray*}
&x_2-\delta \geq \frac{t}{2} + \frac{\pi}{4} - (1-\tau_N) h_N^{max}/2\quad \Rightarrow \quad \cos(x_2-\delta) \leq \cos (t/2 + \pi/4 - (1-\tau_N) h_N^{max}/2)&
\\
&1-4\sin^2(h/2) \leq 1-4\sin^2(h_N^{min}/2)&
\\
&2\sin(x_2-\delta/2)\sin(\delta/2) \leq 2 \sin (\delta/2) \leq 2 \sin ((1-\tau_N)h_N^{max}/4).&
\end{eqnarray*}
Using these bounds in the expression of $C$, we obtain the following upper bound for $C/\cos t$:
\begin{align*}
\frac{C}{\cos t} & \leq \frac{\cos (t/2 +\pi/4-(1-\tau_N) h_N^{max}/2) (1-4\sin^2(h_N^{min}/2)) + 2 \sin ((1-\tau_N)h_N^{max}/4)}{\cos t}
\\
& \leq \frac{\cos (t/2 +\pi/4-(1-\tau_N) h_N^{max}/2)}{\cos t} \,  (1-4\sin^2(h_N^{min}/2)) + 2 \frac{\sin ((1-\tau_N)h_N^{max}/4)}{\cos t_1}.
\end{align*}

Let us consider the first term. We want to show that this is decreasing, namely we claim that 
$$
\forall t\in [t_0,t_1]\quad f(t):=\frac{\cos (t/2 +\alpha_N)}{\cos t}\leq f(t_0),
$$
with $\alpha_N:=\pi/4-(1-\tau_N) h_N^{max}/2$.
To this aim, we prove that $f'<0$ in $[t_0,t_1]$:
\begin{align*}
\cos^2(t) f'(t)= & =- \frac12\sin(t/2 + \alpha_N)\cos t + \sin t \cos(t/2+\alpha_N)
\\
& =\sin (t/2) \cos(t/2)\cos(t/2+\alpha_N)  + \frac12 \sin(t/2-\alpha_N)  \\
& = \sin (t/2) \cos(\alpha_N)\left(\frac12 + \cos^2(t/2)\right) - 
\cos (t/2) \sin(\alpha_N)\left(\frac12 + \sin^2(t/2)\right).
\end{align*}

To prove that
$$\sin (t/2) \cos(\alpha_N)\left(\frac12 + \cos^2(t/2)\right) <
\cos (t/2) \sin(\alpha_N)\left(\frac12 + \sin^2(t/2)\right)$$
we square both sides and setting $x=\cos^2(t/2)$, this leads to consider the polynomial
$$P(x)=x^3 -3\sin^2(\alpha_N) x^2 +\left(\frac94 \sin^2(\alpha_N) -\frac34\cos^2(\alpha_N)\right) x - \frac{\cos^2(\alpha_N)}{4},$$
and look when it is positive. Now, for the three values $(\alpha_4,\alpha_5,\alpha_6)\simeq
(0.7824,0.7832,0.7837)$ obtained from Table 1, we see that the polynomial $P(x)$ has only
one real root which is less than $0.65$. Therefore, as soon as $\cos(t/2)\geq \sqrt{0.65}$
we have $f'<0$ and this is the case for $t\in [t_0,t_1]$. Therefore, we have

\begin{equation}\label{lastestimate}
r\geq 1 -  \frac{\cos (t_0/2 +\pi/4-(1-\tau_N) h_N^{max}/2)}{\cos t_0} \,  (1-4\sin^2(h_N^{min}/2)) - 2 \frac{\sin ((1-\tau_N)h_N^{max}/4)}{\cos t_1}.
\end{equation}

Let us now prove the claim \eqref{claimt}:
\begin{itemize}
\item for $N=4$ the presence of a sector with 3 points and the absence of a sector with 1 point, occurs only when all the sectors have 3 points. If we choose the sector with maximal length, in view of Lemma \ref{lengthsector}, Formula \eqref{lensec2}, we infer that 
$$t\in [t_0,t_1] := [\pi/2-1.1563,\pi/6]\subset [\pi/11, 4\pi/11];
$$
\item for $N=5$ the presence of a sector with 3 points and the absence of a sector with 1 point, occurs only when two sector have 3 points and one sector has 5 points. In particular, there exists a sector with 3 points which attain either the maximal or the minimal length. In the first case, in view of Lemma \ref{lengthsector}, Formula \eqref{lensec2}, we may take (as for $N=4$) 
$$
[t_0,t_1] := [\pi/2-1.1538,\pi/6]\subset [\pi/11, 4\pi/11];
$$
in the second case, in view of Lemma \ref{lengthsector}, Formula \eqref{lensec2}, we may take 
$$[t_0,t_1] := [1.1563/2,\pi/2-2h_5^{min}]=[0.57815,1.0184]\subset [\pi/11, 4\pi/11];$$
\item for $N=6$, we obtain the bounds for $t$ in a different way: since between $t_2$ and $\pi/2$ by assumption we have 3 points, we infer that 
$$t\in [t_0,t_1]:= [\pi/2 - 4 h_6^{max}, \pi/2-2 h_6^{min}]= [0.5619,1.10505]\subset [\pi/11, 4\pi/11].
$$
\end{itemize}
Using these $t_0$ and $t_1$ in \eqref{lastestimate}, we get $r> 0.46$ for $N=4$, $r>0.44$ for $N=5$, and $r> 0.45$, for $N=6$. These lower bounds are all greater than $r_0$. This concludes the proof.
\end{proof}

\section{Conclusion and perspectives}\label{conc}
The Cheeger constant is known to be the first eigenvalue of the $1$-Laplacian, see \cite{KaFr}. On the other side,
according to \cite{JuLiMa}, the first eigenvalue of the $\infty$-Laplacian is nothing else than $1/r$ the inverse of the inradius.
Since the Reuleaux triangle maximizes $1/r(\Omega)$ in the class $\mathcal{B}^1$ and we have proved in this paper
that it also maximizes the Cheeger constant, a very natural question and conjecture is:

\smallskip
\noindent {\bf Conjecture : (Blaschke-Lebesgue Theorem for all eigenvalues)}  Prove that the Reuleaux triangle maximizes the first
eigenvalue of the $p$-Laplacian in the class $\mathcal{B}^1$ for all $p, 1\leq p\leq +\infty$.
\medskip

We conclude by noticing that the problem under study could have been set in a different class of shapes: the planar convex sets with prescribed {\it minimal width} or {\it thickness} (i.e., the minimal distance between two parallel lines enclosing the set). It is immediate to check that a maximizer in this class is actually a body of constant width, namely the width constraint is saturated in any direction. The very same reasoning applies to the minimization problem, replacing the thickness constraint with the diameter constraint.

\section{Appendix} For the benefit of the reader, we gather here the main properties of the function $\mathcal A$, studied in \cite{HL}. As already mentioned, the function $\mathcal A$ associates to $r$ the minimal area of a body with constant with (=1) and inradius $r$. The domain of definition of the function is the interval $[1-1/\sqrt{3}, 1/2]$, which spans all the possible inradii of the bodies of constant width 1. Among them, we highlight the inradii of regular Reuleaux polygons, by labeling them as $r_{_{2N+1}}$, being $2N+1$ the number of sides. The sequence $\{r_{_{2N+1}}\}_{N\in \mathbb N}$ is increasing and runs from $1-1/\sqrt{3}$ to $1/2$ (not attained).

The optimizer is unique and is always a Reuleaux polygon (the regular one for a ``good'' inradius) with a precise structure, that we write here below. The characterization of the optimizer allows one to compute the area quite easily, providing an explicit formula for $\mathcal A(r)$.

In \cite[Theorem 1.2]{HL} we have proved the following:
\begin{itemize}
\item If $r=\r$ for some $N\in \mathbb N$, then the optimal set of $\A(r)$ is the regular Reuleaux $(2N+1)$-gon. 
\smallskip
\item If instead $r_{_{2N-1}}<r<r_{_{2N+1}}$ for some $N\in \mathbb N$, $N\geq 2$, 
setting
$$
\ell(r):=2 \arctan\left(\sqrt{4(1-r)^2-1}\right),\quad x(r):=\frac\pi2 - \frac{2N-1}{2}\, \ell(r),
$$
the optimal set of $\A(r)$ is unique (up to rigid motions) and has the following structure:
\begin{itemize}
\item[i)] it is a Reuleaux polygon with $2N+1$ sides, all but one tangent to the incircle;
\item[ii)] the non tangent side has both endpoints on the outercircle and has length 
$$
a(r):=2\,\arcsin\Big((1-r)\sin(x(r))\Big),
$$ 
its two opposite sides have one endpoint on the outercircle and meet at a point in the interior of the annulus; moreover, they both have length 
$$
b(r):= x(r) + \frac{\ell(r) -a(r)}{2};
$$
\item[iii)] the other $2N-2$ sides are tangent to the incircle, have both endpoints on the outercircle, and have length $\ell(r)$.
\end{itemize}
\smallskip
\item Setting
\begin{align*}
A(r,x,a,b):=& (1-r)^2 \sin x \cos x + \frac{a-\sin a}{2} +b-\sin b
\\ & +(1-r) \big(\cos(a/2)-(1-r)\cos x\big) \sin(x+\ell(r)),
\end{align*}
the least area reads
$$
\mathcal A(r)=\left\{
\begin{array}{lll}
(2N+1)A(\r,0,0,0)\quad & \hbox{if }r=\r,
\\
(2N-2)A(r,0,0,0)+A(r,x(r),a(r),b(r)) \quad & \hbox{if } r_{_{2N-1}}<r<r_{_{2N+1}}.
\end{array}
\right.
$$

\smallskip

\item The function $r\mapsto \mathcal A(r)$ is continuous and increasing.
\end{itemize}

\bigskip

\noindent {\bf Acknowledgements}: 
This work was partially supported by the project ANR-18-CE40-0013 SHAPO financed by the French Agence Nationale de la Recherche (ANR). IL acknowledges the Dipartimento di Matematica - Universit\`a di Pisa for the hospitality.

\medskip

Antoine \textsc{Henrot}, Universit\'e de Lorraine CNRS, IECL, F-54000 Nancy, France. E-mail: \texttt{antoine.henrot@univ-lorraine.fr} 

Ilaria \textsc{Lucardesi}, Universit\'e de Lorraine CNRS, IECL, F-54000 Nancy, France. E-mail: \texttt{ilaria.lucardesi@univ-lorraine.fr}

\end{document}